\documentclass[a4paper,11pt,reqno]{amsart}
\usepackage{amsfonts}
\usepackage{amsmath}
\usepackage{tikz}
\usetikzlibrary{shapes.geometric, arrows.meta, positioning, fit, backgrounds}
\usepackage{logicproof}
\usepackage{amssymb, amsthm}
\usepackage{graphicx}
\usepackage{tabularx}
\usepackage{enumitem} 
\usepackage{caption}
\usepackage{capt-of}
\usepackage{booktabs}
\usepackage{float}
\usepackage{multirow,dsfont}
\usepackage{booktabs,subcaption}
\usepackage{siunitx}
\usepackage{amsthm}
\usepackage[dvipsnames]{xcolor}
\usepackage{array}
\usepackage{bm}
\usepackage{centernot}
\usepackage{enumitem}
\newcolumntype{L}[1]{>{\raggedright\let\newline\\\arraybackslash\hspace{0pt}}m{#1}}
\newcolumntype{C}[1]{>{\centering\let\newline\\\arraybackslash\hspace{0pt}}m{#1}}
\newcolumntype{R}[1]{>{\raggedleft\let\newline\\\arraybackslash\hspace{0pt}}m{#1}}
\usepackage[symbol]{footmisc}
\usepackage[labelfont=bf,format=plain,justification=raggedright,singlelinecheck=false]{caption}
\usepackage{mathrsfs}
\usepackage[]{mdframed}
\usepackage[colorlinks]{hyperref}
\usepackage{tcolorbox}
\usepackage{adjustbox}
\tcbuselibrary{theorems}
\newtcbtheorem[number within=section]{mytheo}{Note}%
{colback=white!5,colframe=black!35!black,fonttitle=\bfseries}{th}
\numberwithin{equation}{section}

\usepackage[margin=20mm]{geometry}
\usepackage{booktabs}
\usepackage{subcaption}
\usepackage{siunitx}
\usepackage{amsthm}
\theoremstyle{remark}
\newtheorem{remark}{Remark}

\usepackage{graphicx}
\usepackage{textgreek}

 {
      \theoremstyle{plain}
      \newtheorem{assumption}{Assumption}
  }
\newtheorem{theorem}{Theorem}[section]
\newtheorem{definition}[theorem]{Definition}

\usepackage{amsthm}

\theoremstyle{remark}
\makeatletter
\def\th@remark{%
  \normalfont 
  \thm@headfont{\bfseries} 
  \thm@notefont{\bfseries}%
  \thm@headpunct{.}%
}
\makeatother

\newtheorem{corollary}[theorem]{Corollary}
\newtheorem{proposition}{Proposition}

\newtheorem{lemma}[theorem]{Lemma}

\title{Scalable Operator Learning via Nyström Approximation With Denoising Applications}

\begin{document}

\maketitle

\author{}

\maketitle

\begin{center}

\author{ Naveen Gupta\textsuperscript{1}, Vaibhav Silmana\textsuperscript{2}, 
S. Sivananthan\textsuperscript{3}
}

\maketitle

\begin{center}

\textsuperscript{1} Department of Statistics, Pennsylvania State University, USA

\textsuperscript{2,3} Department of Mathematics, Indian Institute of Technology Delhi, India 

\textsuperscript{1}: \texttt{ngupta.maths@gmail.com}\\
\textsuperscript{2}: \texttt{vibhusilmana@gmail.com}\\
\textsuperscript{3}: \texttt{siva@maths.iitd.ac.in  }

\end{center}

\end{center}

\begin{abstract}
In this paper, we study Nyström subsampling for vector-valued regression in vector-valued reproducing kernel Hilbert spaces. Standard kernel methods often suffer from prohibitive computational costs due to the construction and inversion of large kernel matrices, which limits their scalability to large datasets. To overcome this bottleneck, we propose an efficient operator learning algorithm based on Nyström subsampling that accommodates functional outputs. Under general source conditions characterized by index functions—extending beyond the classical Hölder-type and operator-monotone frameworks—we establish minimax-optimal convergence rates for the proposed estimator. As an application of the proposed framework, we consider function denoising problems. Unlike classical denoising methods, which are typically tailored to specific signal representations or noise models, our approach formulates denoising within a general operator learning framework. Numerical experiments on signal denoising, real-time audio denoising, image denoising, inverse Radon transform reconstruction, and energy-efficiency prediction confirm that the proposed method achieves performance comparable to full kernel methods while substantially reducing computational cost.


\end{abstract}

\section{Introduction}

{
Denoising is among the most fundamental problems in machine learning, with applications including audio restoration, image reconstruction, medical imaging, motion sensors etc.
In the context of signal denoising, the objective is to recover a clean waveform from a corrupted/perturbed
signal. Similarly in image denoising, the goal is to reconstruct a clean image from perturbations arising from focus blur, motion blur, or Gaussian noise. A formal mathematical framework for denoising seeks to recover the original signal or image $X$ from corrupted observations $Y$. The degradation is commonly modeled as}
\[
Y = X + \epsilon \quad \text{or} \quad Y = H * X,
\]
where $\epsilon$ denotes additive noise and $H$ is a  blur kernel.

Over the years, numerous denoising algorithms have been developed by exploiting structural properties of signals and images, such as sparsity, low-rank structure, and hierarchical feature representations. These ideas have led to several influential classes of methods, including wavelet-based techniques, singular value decomposition (SVD)-based approaches, and deep convolution neural network (DnCNN) models, which we briefly review below.

Wavelet-based methods constitute one of the most influential and widely used frameworks for denoising, owing to their capacity to represent signals and images sparsely in the transform domain \cite{Donoho1994,Donoho1995}. This sparse representation enables a clear separation between significant signal features and noise, allowing effective noise suppression while preserving important structural information. Krim et al. \cite{wavelet_denosiing_2000} proposed a risk-based best-basis denoising method for signals corrupted by additive white Gaussian noise. The method searches families of orthonormal bases, such as wavelet packet and local cosine bases, and selects the basis that minimizes an estimated mean-square reconstruction error after hard thresholding of noisy coefficients. The selected basis is then used to threshold the coefficients and reconstruct the signal. Thus, the framework combines adaptive basis selection with denoising, while associating the reconstruction with an explicit risk/mean-square-error estimate. Sahoo et al.~\cite{optimal_wavelet} introduced a sparsity-based empirical approach for optimal mother-wavelet selection in signal denoising. The method analyzes the sparsity of wavelet detail coefficients across decomposition levels, determines an optimal decomposition level separating noisy and noise-free detail components, and uses the mean of sparsity change, \textcolor{red}{$\mu_{sc}$}, as a quantitative criterion to select the wavelet or group of wavelets with the highest \textcolor{red}{$\mu_{sc}$} values. These selected wavelets are interpreted as maximizing the separation between signal-dominated and noise-dominated components in the wavelet domain, thereby enabling automated wavelet selection for effective denoising.

Al-Tawaha et al.~\cite{Svd_comparison} proposed a non-iterative SVD-based signal denoising method using two independent noisy observations of the same signal. The method forms one Hankel matrix from the average of the two observations and another from half their difference. Using a random-matrix universality argument together with Weyl’s inequality, they show that, asymptotically, the largest singular value of the difference Hankel matrix provides an upper bound on the spurious noise-induced singular values of the average Hankel matrix. Singular values of the average Hankel matrix below this threshold are set to zero, and the denoised signal is reconstructed from the resulting reduced-rank Hankel matrix.

{
Dabov et al.~\cite{Bm3d} presented BM3D, a highly influential image-denoising algorithm based on sparse 3D transform-domain collaborative filtering. The method groups similar 2D image blocks into three-dimensional arrays and performs collaborative filtering using 3D transform-domain shrinkage, implemented through a two-step procedure with collaborative hard-thresholding followed by collaborative Wiener filtering. By exploiting both intra-block and inter-block correlations, BM3D achieves a highly sparse representation of the underlying image and delivers state-of-the-art denoising performance. Zhang et al. \cite{DnCNN-paper} proposed a DnCNN for image denoising that employs residual learning to estimate the noise component rather than directly reconstructing the clean image. By integrating residual learning with batch normalization, the network achieves efficient training and improved denoising performance. Furthermore, DnCNN supports blind Gaussian denoising by learning a single model over a range of noise levels, eliminating the need for prior noise-level estimation.
Liu et al.~\cite{blurring} proposed a spectral approach to blind image deblurring based on the spectral properties of images viewed as convolution operators. Motivated by the observation that sharp images are often high-pass whereas blurry images are low-pass, they derive a convex blur-kernel regularizer that depends only on the observed blurry image, without requiring knowledge of the latent sharp image. This regularizer is incorporated into an alternating minimization framework, where the blur kernel is updated using the spectral regularizer and the latent image is updated via total-variation-based non-blind deconvolution. The method can estimate unknown blur kernels and recover sharp images in challenging, severely blurred cases, though its effectiveness depends on the presence of sufficiently sharp image structures and is limited under substantial noise.}

{
Underlying both audio and image denoising is a shared mathematical structure: the
desired output is not a scalar prediction but an entire function or high-dimensional
structured object. These tasks are canonical instances of a broader and increasingly
important class of problems that require the prediction of \emph{structured outputs},
naturally formulated as \emph{vector-valued regression}, in which one seeks to learn
a mapping from an input space~$\mathcal{X}$ to a high-dimensional---or potentially
infinite-dimensional---output space~$\mathcal{Y}$.}

{ 
The classical scalar-valued regression framework is fundamentally
insufficient for these settings. In contrast, vector-valued regression directly models
the joint output structure, enabling richer predictions over function
spaces, matrix-valued responses, and operator-valued
outputs~\cite{VRKHS(BASIC), holzleitner2026regularizedlearningfunctionaldata,
neural_network_vvr, VRKHS_REF, meunier2024optimal, nnetworks_vvr}, with
the vector-valued reproducing kernel Hilbert space (vRKHS) providing a
principled theoretical foundation for such settings.}


In this work, we put our focus on kernel based methods for vector-valued regression framework. 
Through the \emph{kernel trick}, inputs are implicitly mapped into a 
potentially infinite-dimensional feature space, and the \emph{representer 
theorem}~\cite{Representor_kimeldrof_wahba} guarantees a finite representation of 
the empirical risk minimizer in terms of the training data. In a seminal contribution, Meunier et al. ~\cite{meunier2024optimal} introduced 
a practical algorithm for operator learning via \emph{general 
multiplicative kernels}. By extending the representer theorem of~\cite{Representor_kimeldrof_wahba} to general spectral filter functions, a unified treatment of operator learning algorithms is provided, admitting a closed-form empirical solution that remains tractable even when the output space $\mathcal{Y}$ is infinite-dimensional. However, kernel methods incur $\mathcal{O}(n^2)$ memory and 
$\mathcal{O}(n^3)$ time complexity, rendering them infeasible for large dataset. 
Several strategies have been proposed to address this bottleneck, including 
\emph{distributed learning}~\cite{Distributed_learning, Distributed_1}, 
which partitions data across machines while preserving statistical 
optimality, and \emph{greedy methods}~\cite{greedy}, which select a sparse 
set of basis functions. In this work, we adopt \emph{Nystr\"{o}m 
subsampling} for operator learning, which constructs a low-rank kernel approximation from 
$m \ll n$ subsample points, reducing time complexity to $\mathcal{O}(nm^2 + m^3)$ 
and storage to $\mathcal{O}(nm)$~\cite{nystrom_flr, shuai_lu, 
Myleiko2019RegularizedNS, Myleiko_covariate, less_is_more}.

To the best of our knowledge, the theoretical analysis of Nystr\"om subsampling has been largely confined to the scalar-valued or finite-dimensional output setting over the past decades.
 Minimax-optimal convergence rates were first 
established by Rudi et al. ~\cite{less_is_more} under 
classical H\"{o}lder-type source conditions. This was subsequently extended 
in two directions: Myleiko et al. ~\cite{Myleiko2019RegularizedNS} generalized to the splitting class 
of index functions with adaptive regularization, while Lu et al. ~\cite{shuai_lu} treated the operator monotone class in the 
\emph{misspecified} setting, where the regression function need not belong 
to the RKHS. More recently, the framework has been extended to 
\emph{covariate shift} \cite{covnystrom,Myleiko_covariate}. Myleiko and Solodky~\cite{Myleiko_covariate} 
studied uniformly bounded importance weights under general splitting-type 
regularization, while   Vecchia et al. \cite{covnystrom} established optimal rates for 
both bounded and unbounded weights under H\"{o}lder conditions in the 
misspecified setting.
{Despite significant progress in this area, the existing theoretical analyses 
of Nystr\"{o}m subsampling have been largely confined to scalar-valued 
output settings, or at most to 
finite-dimensional outputs.  While spectral regularization methods such as 
kernel ridge regression  have been analyzed in the 
vRKHS setting, the theoretical analysis of 
Nystr\"{o}m subsampling in the vRKHS setting remains largely 
underdeveloped, representing a significant gap between theory and the 
growing practical demand for scalable operator learning methods}.


In the vector-valued regression setting, Caponnetto and De Vito~\cite{devito(2007)} established optimal convergence rates for Tikhonov regularization under Hölder source conditions for kernels whose associated integral operators are trace-class. This line of work was subsequently extended by Rastogi and Sampath~\cite{rastogi2017optimal}, who derived optimal convergence rates for both Tikhonov and general regularization schemes under more general source conditions, while still operating within the trace-class framework. Despite their theoretical significance, these results do not directly lead to practical operator learning algorithms. For example, scalar multiplicative kernels admit a representer theorem with a closed-form solution, thereby yielding computationally tractable operator learning algorithms \cite{meunier2024optimal,first_vector_valued_algo}. However, the integral operators associated with such kernels are generally not trace-class and therefore
fall outside the theoretical framework of the aforementioned works.

To overcome this limitation, Meunier et al.~\cite{meunier2024optimal} introduced a novel framework for learning with general multiplicative kernels. Their approach reformulates the problem by shifting the search space from the hypothesis space  
to a suitable space of Hilbert--Schmidt operators, and establishes optimal convergence rates for general regularization schemes under Hölder source conditions. 
Motivated by the above work, we aim to develop a computationally efficient operator learning 
algorithm and establish optimal convergence rates under general source conditions, characterized 
by an index function.
Although the proposed framework is applicable to a broad class of operator learning problems, we focus on signal denoising as a representative application to demonstrate its practical effectiveness.

\vspace{0.3em}

\textbf{Main Contributions.} In this work, we investigate the Nystr\"om subsampling approach for vector-valued regression with functional responses within the vRKHS framework. Specifically, the main contributions of this work are summarized as follows:

\begin{enumerate}[label=(\roman*)]
    \item We design a Nystr\"{o}m-based operator learning algorithm in the vRKHS setting, reducing the time complexity of kernel methods to $\mathcal{O}(nm^2)$ and memory complexity to $\mathcal{O}(nm)$ while accommodating infinite-dimensional outputs.

    \item We establish minimax-optimal convergence rates under a general class of source 
    conditions characterized by arbitrary index functions, subsuming  H\"{o}lder, 
    operator monotone and splitting form of index function assumptions as special cases.

    \item  We validate our method on signal denoising, real-time audio denoising, image denoising, inverse randon transform reconstruction, and  energy 
    efficiency prediction --- demonstrating 
    accuracy comparable to full kernel methods at significantly reduced computational cost.
   
\end{enumerate}

The remainder of this paper is organized as follows. In Section~\ref{section:2}, 
we introduce the necessary preliminaries and notation required for our analysis, 
and present the construction of our proposed Nystr\"{o}m algorithm for vector-valued 
operator learning. Section~\ref{section:3} provides theoretical convergence rates 
under general source condition. 
Section~\ref{section:4} reports numerical experiments on a variety of tasks, illustrating 
the practical effectiveness of our method.

\section{ Kernel Methods and Nyström Approximations: Background and Proposed Algorithm}\label{section:2}

In this section, we provide the mathematical structure of our model and present the necessary preliminaries required for the analysis, followed by our proposed algorithm.

\noindent
Let $\mathcal{X}$ be a second countable locally compact Hausdorff space equipped with a Borel $\sigma$-field $\mathcal{F}_\mathcal X$ and $\mathcal Y$ be a real separable Hilbert space $(\mathcal {Y}, \langle \cdot, \cdot \rangle_{{\mathcal Y}})$. It is assumed that the input space $\mathcal{X}$ and output space $\mathcal{Y}$ are related by some joint distribution $\rho$ on $\mathcal{X}\times \mathcal{Y}$ and under some conditions this probability distribution $\rho$ splits into marginal distribution $\rho_\mathcal X$ and conditional distribution $\rho(y|x)$ i.e., 
$$\rho(x,y) =\rho_\mathcal X(x)\rho(y|x),~\forall~ x \in \mathcal  X, y \in \mathcal Y.$$

\noindent
{The goal is to find a function $f:\mathcal{X} \to \mathcal Y$ which predicts the response $y$ for an unseen input $x$. To measure the error for such a function} $f :\mathcal{X} \to \mathcal Y$, we define the \emph{expected risk} of choosing $f$ as a predictor by
$$\mathcal{R}(f) =  \int_{\mathcal{X}\times \mathcal{Y}} \|f(x) - y\|_\mathcal Y^2 \, d\rho(x, y).$$

\noindent
Under the assumption 
$\int_{\mathcal X \times \mathcal Y} \|y\|_\mathcal{Y}^{2} \, d\rho(x,y) < \infty$, we can see that the regression function defined as 
\[
    f_{\rho}(x) \;=\; \int_{\mathcal{Y}} y \, d\rho(y \mid x),
    \qquad x \in \mathcal{X},
\]
lies in $L^{2}(\rho_{X}; \mathcal{Y})$
and minimizes the \emph{expected risk} $\mathcal{R}(f)$ over $L^{2}(\rho_{X}; \mathcal{Y})$. Note that, \(L^{2}(\rho_{X};\mathcal Y)\) denotes the Bochner space of square-integrable functions with respect to the measure \(\rho_\mathcal X\), taking values in a Hilbert space \( \mathcal Y\).\\

\noindent
In practical situations, only information about $\rho$ is known through a finite set of $n$ independent observations denoted as
\begin{center}
    $z = \{ (x_1, y_1), \dots, (x_n, y_n) \} $,
\end{center}
and hence neither the expected risk $\mathcal{R}(f)$ nor the regression function $f_\rho$ can be computed directly.
To address this issue, we consider the Tikhonov regularization based empirical risk minimization problem:

\begin{equation}\label{risk}
    \min_{f \in \Theta}\mathcal{R}_z(f)~,~\qquad \mathcal{R}_z(f) = \frac{1}{n}\sum_{i=1}^n \| f(x_i) - y_i \|_{\mathcal{Y}}^2 + \lambda \|f\|^2_\mathcal{G} ,
\end{equation}
over a pre-defined hypothesis space $\Theta$.  One important class of hypothesis spaces  which we will use in this work is vRKHS. Next we will be giving a short introduction to vRKHS.






\vspace{0.8em}
\subsection{Construction of vRKHS}

\noindent
Let \( \mathcal{X} \) be a non-empty set and let \( \mathcal{Y} \) be a real separable Hilbert space equipped with inner product \( \langle \cdot,\cdot \rangle_{\mathcal Y} \).  
Consider an operator-valued kernel
\[
K : \mathcal{X} \times \mathcal{X} \to \mathcal L(\mathcal{Y}),
\]
where \( \mathcal L(\mathcal Y) \) denotes the space of bounded linear operators on \( \mathcal Y \).  
We assume that \( K \) is symmetric and positive-definite, meaning that for every \( n \in \mathbb N \), any collection of points \( \{x_i\}_{i=1}^n \subset \mathcal X \), and vectors \( \{y_i\}_{i=1}^n \subset \mathcal Y \),
\[
\sum_{i=1}^n \sum_{j=1}^n 
\left\langle y_i, K(x_i,x_j)y_j \right\rangle_{\mathcal Y} \geq 0.
\]

\noindent
Associated with the kernel \(K\), we first define the pre-Hilbert space
\[
\mathcal G_{\mathrm{pre}}
:=
\mathrm{span}\left\{
K_x y := K(\cdot,x)y
\;\middle|\;
x\in\mathcal X,\; y\in\mathcal Y
\right\},
\]
consisting of finite linear combinations of kernel sections.  
The inner product on \( \mathcal G_{\mathrm{pre}} \) is introduced through the kernel itself by setting
\[
\left\langle K_x y,\; K_{x'} y' \right\rangle_{\mathcal G}
:=
\left\langle y,\; K(x,x')y' \right\rangle_{\mathcal Y},
\]
and extending bilinearly to all elements of \( \mathcal G_{\mathrm{pre}} \).

\noindent
The vector-valued reproducing kernel Hilbert space associated with \(K\), denoted by \( \mathcal G \), is then defined as the completion of \( \mathcal G_{\mathrm{pre}} \) with respect to this inner product.  
Moreover, there exists a one-to-one correspondence between operator-valued positive-definite kernels and vRKHSs~\cite{first_vector_valued_algo}. For a detailed and comprehensive study of vRKHS, we refer readers to see~\cite{VRKHS(BASIC), carmeli2008vectorvaluedreproducingkernel,pedrick1957reproducing,schwartz_VRKHS_ORIGIN} and references therein.

\vspace{0.4em}

\noindent
{To dive deep into the theoretical analysis of kernel method, we need to define these following operators.} Let \(X= \{ x_1, x_2, \dots, x_n \} \) and \(\textbf{y}= \{ y_1, y_2, \dots, y_n \} \) be an ordered set.  
We define the \emph{sampling operator}
\[
S_{X} : \mathcal{G} \longrightarrow \mathcal Y^n, 
\qquad 
S_{X}(f) = \bigl(f(x_1), f(x_2), \dots, f(x_n)\bigr),
\]
and its \emph{adjoint operator}
\[
S_{X}^* :\mathcal Y^n \longrightarrow \mathcal{G}, 
\qquad 
S_{X}^*(\mathbf{y}) = \frac{1}{n} \sum_{i=1}^n  K_{x_i} y_i,
\]
where $\mathcal{Y}^{n} = \underbrace{\mathcal{Y} \times \mathcal{Y} \times \cdots\times \mathcal{Y}}_{n-\text{times}}$.

\noindent
Taking Fréchet derivative and putting it against, it can be easily seen that the empirical minimizer \( \widehat f_{z,\lambda} \) of the empirical risk~\eqref{risk} over the hypothesis space \( \mathcal{G} \) satisfies
{\begin{equation}\label{equation:general f_Z}
 \widehat f_{z,\lambda} \;=\;\bigl(S_{X}^* S_{X} +\lambda I \bigr)^{-1} S_{X}^* \mathbf{y}.
\end{equation}}
\vspace{0.4mm}

\noindent
Throughout this paper, we will consider that the space vRKHS $\mathcal{G}$ is induced by a kernel of the form
\begin{equation}\label{multk}
K(x,t) \;=\; k(x,t)\, Id_\mathcal Y,    
\end{equation}
(such kernel is referred as scalar multiplicative kernel) where $Id_\mathcal Y :\mathcal Y \to\mathcal Y$ denotes the identity operator on $\mathcal Y$ and $k : \mathcal{X} \times \mathcal{X} \to \mathbb{R}$ is a scalar valued reproducing kernel. The scalar valued RKHS induced by kernel $k$ is denoted by $\mathcal{H}$.\\

\noindent
{As the associated integral operator for the multiplicative kernel is not compact even under the standard assumption $\sup_{x\in \mathcal{X}}k(x,x)\leq \kappa^2$, the analysis can not be done in usual framework with classical techniques. Next we state an important theorem that establishes an isomorphism between the vRKHS $\mathcal{G}$ and the Hilbert space of Hilbert--Schmidt operators from $\mathcal{H}$ to $\mathcal Y$, denoted by $\mathcal{S}_2(\mathcal{H},\mathcal Y)$. We use the notation $\mathcal{S}_2(\mathcal{H})$ to denote the space of Hilbert--Schmidt operators from $\mathcal{H}$ to $\mathcal{H}$.}

\begin{theorem}[\cite{VRKHS_REF}, Corollary 1]\label{vrkhs-isomorphism} Let $\phi:\mathcal X\rightarrow \mathcal{H}$ denotes the feature map such that $\phi(x) = k(x,\cdot)$. Then for every function $f \in \mathcal{G}$ there exists a unique operator 
$C \in \mathcal S_2(\mathcal{H}, \mathcal{Y})$ such that 

\begin{center}
    $f(\cdot) = C \phi(\cdot) $
\end{center}

\noindent
with 

\begin{center}
    $\|C\|_{\mathcal S_2(\mathcal{H}, \mathcal{Y})} = \|f\|_{\mathcal{G}}$
\end{center}

\noindent
and vice versa. Hence $\mathcal{G} \simeq \mathcal S_2(\mathcal{H}, \mathcal{Y})$ and
it follows that $\mathcal{G}$ can be written as

\begin{center}
    $\mathcal{G} = \{ f : \mathcal{X} \to \mathcal{Y} \mid 
    f = C \phi(\cdot), \; C \in \mathcal S_2(\mathcal{H}, \mathcal{Y}) \}.$
\end{center}
\end{theorem}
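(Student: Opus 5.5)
The plan is to build an isometry $\Psi:\mathcal S_2(\mathcal H,\mathcal Y)\to\mathcal G$, $\Psi(C)=C\phi(\cdot)$. First we show it is isometric on a dense subspace. Then we extend it by continuity and identify the extension with the pointwise formula. Finally we show it is onto.

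\textbf{Step 1 (rank-one operators).} For $y\in\mathcal Y$ and $x\in\mathcal X$, let $C_{x,y}:=y\otimes\phi(x)$, i.e.\ $C_{x,y}h=\langle h,\phi(x)\rangle_{\mathcal H}\,y$. By the reproducing property,
\[
C_{x,y}\phi(t)=k(x,t)\,y=K(t,x)y=(K_xy)(t).
\]
So $\Psi$ maps $C_{x,y}$ to the kernel section $K_xy$, and by linearity maps the span of such rank-one operators onto $\mathcal G_{\mathrm{pre}}$. Next we compare inner products:
\[
\langle y\otimes\phi(x),\,y'\otimes\phi(x')\rangle_{\mathcal S_2}=\langle y,y'\rangle_{\mathcal Y}\,k(x,x')=\langle y,K(x,x')y'\rangle_{\mathcal Y}=\langle K_xy,K_{x'}y'\rangle_{\mathcal G}.
\]
Extending bilinearly, $\Psi$ is an isometry from $V:=\mathrm{span}\{y\otimes\phi(x)\}$ onto $\mathcal G_{\mathrm{pre}}$. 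Isometry also gives well-definedness: a zero combination in $\mathcal S_2$ has zero $\mathcal G$-norm.

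\textbf{Step 2 (density and extension).} The set $\{\phi(x)\}$ spans a dense subspace of $\mathcal H$. Also, $\mathcal S_2(\mathcal H,\mathcal Y)\cong\mathcal Y\otimes\mathcal H$ is the closed span of the elementary tensors $y\otimes h$. Approximating each $h$ by finite combinations of the $\phi(x)$ shows that $V$ is dense in $\mathcal S_2$. Since $\mathcal G$ is the completion of $\mathcal G_{\mathrm{pre}}$, the isometry $\Psi$ extends uniquely to an isometric isomorphism $\overline\Psi:\mathcal S_2(\mathcal H,\mathcal Y)\to\mathcal G$, and $\|\overline\Psi C\|_{\mathcal G}=\|C\|_{\mathcal S_2}$.

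\textbf{Step 3 (identifying $\overline\Psi C$ with $C\phi(\cdot)$).} This is the main subtlety. Elements of $\mathcal G$ are abstract limits in the completion, so we must check that the limit really is the function $t\mapsto C\phi(t)$. Take $C_n\in V$ with $C_n\to C$ in $\mathcal S_2$. Pointwise,
\[
\|C_n\phi(t)-C\phi(t)\|_{\mathcal Y}\le\|C_n-C\|_{\mathrm{op}}\,\|\phi(t)\|_{\mathcal H}\le\|C_n-C\|_{\mathcal S_2}\sqrt{k(t,t)}\to0.
\]
On the $\mathcal G$ side, the reproducing property $\langle f(t),y\rangle_{\mathcal Y}=\langle f,K_ty\rangle_{\mathcal G}$ holds on $\mathcal G_{\mathrm{pre}}$ and passes to the completion. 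Hence evaluation is continuous on $\mathcal G$, and $\Psi(C_n)(t)\to(\overline\Psi C)(t)$ for every $t$. Uniqueness of pointwise limits then gives $\overline\Psi C=C\phi(\cdot)$. This also shows that elements of the completion are genuine functions.

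\textbf{Uniqueness and conclusion.} Suppose $C\phi(\cdot)=C'\phi(\cdot)$. Then $C-C'$ vanishes on every $\phi(x)$, hence on the dense span, hence on $\mathcal H$, so $C=C'$. Surjectivity of $\overline\Psi$ says every $f\in\mathcal G$ has the form $C\phi(\cdot)$. Together with the norm identity, this yields $\mathcal G\simeq\mathcal S_2(\mathcal H,\mathcal Y)$ and the stated description of $\mathcal G$.
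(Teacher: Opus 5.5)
Your proof is correct and complete. The paper does not prove this statement; it imports it as Corollary~1 of \cite{VRKHS_REF}, so there is no internal proof to follow step by step.

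The usual derivation in that literature factors the kernel through a feature operator: $K(x,t)=k(x,t)\,Id_{\mathcal Y}=\Phi(x)^*\Phi(t)$, with $\Phi(x)y=y\otimes\phi(x)\in\mathcal S_2(\mathcal H,\mathcal Y)$. It then invokes the general theorem that $C\mapsto\Phi(\cdot)^*C=C\phi(\cdot)$ is a partial isometry from $\mathcal S_2(\mathcal H,\mathcal Y)$ onto $\mathcal G$, whose kernel is the orthogonal complement of $\operatorname{span}\{y\otimes\phi(x)\}$. Your argument is the same mechanism worked out by hand:
\begin{itemize}
\item Step~1 is the computation $\Phi(x)^*\Phi(t)=k(x,t)\,Id_{\mathcal Y}$ together with the Gram identity.
\item Step~2's density of $V$ is exactly the statement that this kernel is trivial.
\item Step~3 does directly what the general theorem gets from uniqueness of the RKHS attached to $K$. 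Using the evaluation bound $\|f(t)\|_{\mathcal Y}\le\sqrt{k(t,t)}\,\|f\|_{\mathcal G}$, it shows that the abstract completion $\mathcal G$ is realized by the genuine functions $C\phi(\cdot)$.
\end{itemize}

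Your route has three advantages. It is self-contained and uses only the construction of $\mathcal G$ given in Section~2. It makes visible where density of $\operatorname{span}\{\phi(x)\}$ enters, namely in the uniqueness of $C$ and in the surjectivity of $\overline\Psi$. It also shows that only $k(t,t)<\infty$ at each point is needed, not a uniform bound on $k$.

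The feature-map route is shorter once the general theorem is available. It also applies verbatim to any operator-valued kernel that admits such a factorization.
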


Next we state the representer theorem for the minimizer of \eqref{risk} established in \cite{meunier2024optimal} for scalar multiplicative kernels.

\begin{proposition}\label{rep-general}
Let $(\mathbf{K})_{ij} = k(x_i, x_j)$, $1 \leq i,j \leq n$, denote the Gram matrix associated with the scalar-valued kernel $k$. 
Then, the estimator $\widehat{f}_{z,\lambda}$ admits the representation
\begin{equation}
    \widehat{f}_{z,\lambda}(x)
    \;=\;
    \sum_{i=1}^{n} y_i\, \alpha_i(x),
    \qquad
    \alpha(x)
    \;=\;
    \, \left( {\mathbf{K}} +\lambda n I \right)^{-1} \mathbf{k}_x,
    \qquad
    (\mathbf{k}_x)_i = k(x, x_i), \quad 1 \leq i \leq n.
    \label{eq:representer1}
\end{equation}
\end{proposition}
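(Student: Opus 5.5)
We start from the closed form \eqref{equation:general f_Z}, $\widehat f_{z,\lambda}=(S_X^*S_X+\lambda I)^{-1}S_X^*\mathbf{y}$, and use the push-through identity to move the inverse from $\mathcal G$ to $\mathcal Y^n$. For bounded $A$ and $\lambda>0$ we have $(A^*A+\lambda I)^{-1}A^* = A^*(AA^*+\lambda I)^{-1}$. To see this, multiply $A^*(AA^*+\lambda I)=(A^*A+\lambda I)A^*$ on the left by $(A^*A+\lambda I)^{-1}$ and on the right by $(AA^*+\lambda I)^{-1}$. Both inverses exist since $A^*A$ and $AA^*$ are positive. Applying this with $A=S_X$ gives
\[
\widehat f_{z,\lambda}=S_X^*\,(S_XS_X^*+\lambda I)^{-1}\mathbf{y}.
\]

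Next we compute $S_XS_X^*$ on $\mathcal Y^n$ for the multiplicative kernel \eqref{multk}. For $\mathbf{c}=(c_1,\dots,c_n)\in\mathcal Y^n$,
\[
(S_XS_X^*\mathbf{c})_i=\frac1n\sum_j K(x_i,x_j)c_j=\frac1n\sum_j k(x_i,x_j)c_j .
\]
So $S_XS_X^*=\tfrac1n(\mathbf K\otimes Id_{\mathcal Y})$, meaning the scalar matrix $\tfrac1n\mathbf K$ acts on the index $j$ and trivially on $\mathcal Y$. Because of this tensor structure, $(S_XS_X^*+\lambda I)^{-1}=n(\mathbf K+\lambda nI)^{-1}\otimes Id_{\mathcal Y}$. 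This is checked directly, since $\mathbf K+\lambda n I$ is an invertible symmetric matrix. Hence
\[
\mathbf{c}:=(S_XS_X^*+\lambda I)^{-1}\mathbf y, \qquad c_j=n\sum_l\big[(\mathbf K+\lambda nI)^{-1}\big]_{jl}\,y_l .
\]

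Finally we evaluate at $x$:
\[
\widehat f_{z,\lambda}(x)=\frac1n\sum_j k(x,x_j)c_j=\sum_l y_l\sum_j k(x,x_j)\big[(\mathbf K+\lambda nI)^{-1}\big]_{jl}.
\]
By symmetry of $(\mathbf K+\lambda nI)^{-1}$, the inner sum equals $\big((\mathbf K+\lambda nI)^{-1}\mathbf k_x\big)_l=\alpha_l(x)$, which is \eqref{eq:representer1}.

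The only delicate step is justifying the push-through identity and the inverse in the infinite-dimensional $\mathcal Y^n$, with the inner product on $\mathcal Y^n$ scaled consistently with the $1/n$ in $S_X^*$. We take $\langle\mathbf a,\mathbf b\rangle=\frac1n\sum_i\langle a_i,b_i\rangle_{\mathcal Y}$, which makes $S_X^*$ the true adjoint. The tensor form then reduces invertibility to the finite matrix $\mathbf K+\lambda nI$, so no compactness of $S_X^*S_X$ is needed. Alternatively, one could use Theorem~\ref{vrkhs-isomorphism} and solve for $C\in\mathcal S_2(\mathcal H,\mathcal Y)$ via the scalar empirical covariance, but the route above is more direct.
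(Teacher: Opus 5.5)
Your argument is correct. The paper does not actually prove this proposition; it quotes it from \cite{meunier2024optimal}. There the estimator is treated in the Hilbert--Schmidt picture of Theorem~\ref{vrkhs-isomorphism}, i.e.\ as $\widehat C_\lambda=\widehat C_{\mathcal Y,\mathcal X}(\widehat C_{\mathcal X}+\lambda I)^{-1}$ (the case $P=I$ of Lemma~\ref{reprenstorforproof}) evaluated at $\phi(x)$. The push-through identity is applied to the \emph{scalar} sampling operator $\mathcal H\to\mathbb R^n$, and for a general spectral filter $g_\lambda$, not only Tikhonov.

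You instead stay in $\mathcal G$ and push through the vector-valued sampling operator $S_X:\mathcal G\to\mathcal Y^n$. You then use $S_XS_X^*=\tfrac1n\mathbf K\otimes Id_{\mathcal Y}$ to reduce the inverse on $\mathcal Y^n$ to the matrix $\mathbf K+\lambda nI$. Your $\tfrac1n$-weighted inner product on $\mathcal Y^n$, which is needed for $S_X^*$ to be the true adjoint, is right, and so is the final symmetry step.

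Your route is more self-contained. It uses only \eqref{equation:general f_Z} and the multiplicative form \eqref{multk}, not the isomorphism $\mathcal G\simeq\mathcal S_2(\mathcal H,\mathcal Y)$. It also makes transparent that the $Id_{\mathcal Y}$ factor is why only a scalar Gram matrix is ever inverted.

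The Hilbert--Schmidt route never inverts anything on $\mathcal Y^n$, since the outputs enter only linearly through $\widehat C_{\mathcal Y,\mathcal X}$. It produces the operator form of the estimator on which Section~\ref{section:3} is built, and it extends directly to general filters. Your argument extends too, via $g(A^*A)A^*=A^*g(AA^*)$ and $g(\tfrac1n\mathbf K\otimes Id_{\mathcal Y})=g(\tfrac1n\mathbf K)\otimes Id_{\mathcal Y}$, but this needs functional calculus on $\mathcal Y^n$ rather than on $\mathbb R^n$.
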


\noindent
Since the Gram matrix $\mathbf{K}$ is symmetric, it follows that 
$\left( {\mathbf{K}} +\lambda n I \right)^{-1} $ is symmetric as well. 
Using this property, the estimator $\widehat{f}_{z,\lambda}$ can equivalently be expressed as
\begin{equation}
    \widehat{f}_{z,\lambda}(x)
    \;=\;
    \sum_{i=1}^{n} k_{x_i}(x)\, \alpha_i,
    \qquad
    \alpha
    \;=\;
    \left( {\mathbf{K}} +\lambda n I \right)^{-1}  \mathbf{y},
    \qquad
    \mathbf{y} = (y_1, \ldots, y_n)^\top.
    \label{eq:representer2}
\end{equation}

\noindent
A major limitation of the kernel methods lies in its computational burden. 
Here, the construction of the estimator requires to invert an \( n \times n \) size Gram matrix which entails a time complexity of 
\(\mathcal{O}(n^3)\) and a memory complexity of \(\mathcal{O}(n^2)\). 
Such requirements can become prohibitive as the dataset size grows, thereby 
limiting the scalability of the method. To overcome this limitation of kernel methods, we study the Nyström subsampling method in the vRKHS framnework.

\subsection{Nyström subsampling method}
Let $\widetilde{X} = \{\tilde{x}_1, \tilde{x}_2, \ldots, \tilde{x}_m\}$ denote a set of $m$ points sampled uniformly from the training inputs $\{x_1, x_2, \ldots, x_n\}$, and let $\widetilde{Y} = \operatorname{span}\{y_1, \ldots, y_n\} \subseteq \mathcal{Y}$ be the subspace spanned by the training outputs. We define the Nystr\"{o}m subspace $\widetilde{\mathcal{G}}_m \subseteq \mathcal{G}$ by
\[
\widetilde{\mathcal{G}}_m = \operatorname{span}\bigl\{ k(x, \cdot)\, h : x \in \widetilde{X},\; h \in \widetilde{Y} \bigr\},
\]
that is, the linear span of all elements of the form $k(x, \cdot)h$, where $x$ ranges over $\widetilde{X}$ and $h$ ranges over $\widetilde{Y}$. The main idea of the Nystr\"{o}m method is to work with this smaller subspace $\widetilde{\mathcal{G}}_{m}$ instead of the full vRKHS $\mathcal{G}$, and to formulate the following empirical risk minimization problem:

\begin{equation}\label{p1}
\widehat{f}_{m,\lambda} := \arg\min_{f \in \widetilde{\mathcal{G}}_m} \frac{1}{n} \sum_{i=1}^n \| f(x_i) - y_i \|_{\mathcal{Y}}^2 + \lambda \|f\|_{\mathcal{G}}^2.
\end{equation}

\noindent
By applying Theorem~\ref{vrkhs-isomorphism} to 
the empirical risk functional in~\eqref{p1}, we obtain
\begin{equation}\label{equation:emphiirical_Riskin C}
\mathcal{R}_{z}(C) 
    \;:=\; \frac{1}{n}\sum_{i=1}^n 
    \left\| y_i - C \phi(x_i) \right\|_{\mathcal{Y}}^2 
    + \lambda \|C\|_{\mathcal{S}_2(\mathcal{H},\mathcal{Y})}^2.
\end{equation}
Moreover, the minimization problem over $\mathcal{G}$ can now be equivalently 
expressed as a minimization over the space of Hilbert--Schmidt operators:
\[
\min_{f \in \mathcal{G}} \; \mathcal{R}_{z}(f)
\;=\;
\min_{C \in \mathcal{S}_2(\mathcal{H},\mathcal{Y})} \; \mathcal{R}_{z}(C).
\]

By recasting the problem in the space of Hilbert--Schmidt operators, 
we effectively transform the search for an optimal function 
$f \in \mathcal{G}$ into the task of identifying the best operator 
$C \in \mathcal S_2(\mathcal{H}, \mathcal{Y})$ that minimizes the same risk functional. Let $C_* \in \mathcal S_2(\mathcal{H},\mathcal Y)$ denotes the minimizer of $\mathcal{R}_{\mathbf z}(C)$ over $\mathcal S_2(\mathcal{H},\mathcal Y)$, then the minimizer over $\mathcal{G}$  is given by
\[
f^\ast(\cdot)=C_* \phi(\cdot).
\]
  
\noindent
We briefly recall the concept of the tensor product of Hilbert spaces 
before presenting several lemmas that are essential for the construction 
of our algorithm. The reader is referred to \cite{aubin} for a thorough treatment.

\medskip

Let $\mathcal{H}$ and $\mathcal{H}'$ be two Hilbert spaces. The tensor product space $\mathcal{H} \otimes \mathcal{H}'$ is the Hilbert space generated by finite linear combinations of elementary tensors of the form $x \otimes x'$, where $x \in \mathcal H$ and $x' \in \mathcal H'$.
Each elementary tensor $x \otimes x'$ is identified with the rank-one operator from $\mathcal H'$ to $\mathcal H$ given by
\begin{equation*}
    x \otimes x': \mathcal{H}' \to \mathcal{H}, ~\qquad~~~~~ y'\mapsto \langle y',x' \rangle_{\mathcal H'}x,
\qquad y' \in \mathcal H'.
\end{equation*}

Furthermore, $\mathcal H \otimes \mathcal H'$ is  isometrically isomorphic to the Hilbert space of Hilbert--Schmidt operators $\mathcal S_2(\mathcal H',\mathcal H)$. Hence, we identify these spaces and use the notations interchangeably throughout this work.

\noindent

\noindent
\begin{lemma}\label{Gm=Gm'lemma}
Let
$\mathcal{H}_m
=
\operatorname{span}\{\phi(\tilde{x}) : \tilde{x}\in \widetilde{X}\}
\subseteq \mathcal H$,
and $P:\mathcal H \to \mathcal H_m$
be the orthogonal projection onto \(\mathcal H_m\). Define the class
\[
\mathcal{G}_m'
=
\left\{
f:\mathcal X \to \mathcal Y
\;\middle|\;
f(x)= C_m(P\phi(x))
\text{ for some }
C_m \in \mathcal S_2(\mathcal H_m,\widetilde{\mathcal Y})
\right\}.
\]

\noindent
Then the function class \(\mathcal G_m'\) coincides with
\(\widetilde{\mathcal G}_m\), i.e., 
\[
\mathcal G_m'=\widetilde{\mathcal G}_m.
\]
\end{lemma}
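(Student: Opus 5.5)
The plan is to prove the two inclusions $\widetilde{\mathcal G}_m\subseteq\mathcal G_m'$ and $\mathcal G_m'\subseteq\widetilde{\mathcal G}_m$ directly, working with finite-rank operators. Everything is finite dimensional: $\mathcal H_m$ has dimension at most $m$, and $\widetilde{\mathcal Y}=\widetilde Y$ has dimension at most $n$. Hence every linear map $\mathcal H_m\to\widetilde{\mathcal Y}$ is automatically Hilbert--Schmidt, and any such map is a finite sum of rank-one tensors $h\otimes g$ with $h\in\widetilde{\mathcal Y}$ and $g\in\mathcal H_m$. The one identity used throughout is the reproducing property combined with self-adjointness of $P$: for $g\in\mathcal H_m$,
\[
\langle P\phi(x),g\rangle_{\mathcal H}=\langle \phi(x),Pg\rangle_{\mathcal H}=\langle\phi(x),g\rangle_{\mathcal H}=g(x).
\]

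For $\widetilde{\mathcal G}_m\subseteq\mathcal G_m'$, take a generator $k(\tilde x_j,\cdot)h$ with $\tilde x_j\in\widetilde X$ and $h\in\widetilde Y$. Define $C_m:=h\otimes\phi(\tilde x_j)$, restricted to $\mathcal H_m$; this is a rank-one map $\mathcal H_m\to\widetilde{\mathcal Y}$, so it lies in $\mathcal S_2(\mathcal H_m,\widetilde{\mathcal Y})$. Using the identity above,
\[
C_m(P\phi(x))=\langle P\phi(x),\phi(\tilde x_j)\rangle_{\mathcal H}\,h=k(\tilde x_j,x)\,h .
\]
Since $\mathcal G_m'$ is a linear space (the map $C_m\mapsto C_m P\phi(\cdot)$ is linear), finite linear combinations of such generators also lie in $\mathcal G_m'$.

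For the reverse inclusion $\mathcal G_m'\subseteq\widetilde{\mathcal G}_m$:
\begin{enumerate}
\item Choose a basis $\{h_l\}$ of $\widetilde{\mathcal Y}$ and write $C_m=\sum_{l} h_l\otimes g_l$, where $g_l:=C_m^*h_l'$ for the dual basis $\{h_l'\}$. Then $g_l\in\mathcal H_m$, and each $g_l$ can be expanded as $g_l=\sum_j a_{lj}\phi(\tilde x_j)$.
\item Compute
\[
C_m(P\phi(x))=\sum_l\langle P\phi(x),g_l\rangle_{\mathcal H}\,h_l=\sum_l g_l(x)\,h_l=\sum_{l,j}a_{lj}\,k(\tilde x_j,x)\,h_l ,
\]
which is visibly an element of $\widetilde{\mathcal G}_m$.
\end{enumerate}

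The only points needing care are bookkeeping rather than a real obstacle. First, I interpret $\widetilde{\mathcal Y}$ in the statement as the span $\widetilde Y$ of the outputs. Second, the decomposition of a general $C_m$ into rank-one pieces with first factors in $\widetilde{\mathcal Y}$ and second factors in $\mathcal H_m$ holds because $C_m$ has finite rank with range in $\widetilde{\mathcal Y}$. Third, the reproducing property must be applied with $P$ moved across the inner product, which is justified since $P$ is an orthogonal projection and $g_l\in\mathcal H_m$.
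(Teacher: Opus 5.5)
Your proof is correct and follows essentially the same route as the paper: both inclusions are proved directly, using the identity $\langle P\phi(x),g\rangle_{\mathcal H}=\langle\phi(x),g\rangle_{\mathcal H}$ for $g\in\mathcal H_m$ and the fact that a linear map from the finite-dimensional space $\mathcal H_m$ into $\widetilde Y$ has a finite expansion of the form $g\mapsto\sum_i h_i\langle g,\phi(\tilde x_i)\rangle_{\mathcal H}$. Your dual-basis decomposition $C_m=\sum_l h_l\otimes C_m^*h_l'$, followed by expanding each $C_m^*h_l'$ in the $\phi(\tilde x_j)$, makes explicit why that expansion exists, which the paper asserts without detail.
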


\begin{proof}
Let $f \in \widetilde{\mathcal{G}}_m$. Then there exist $h_1,\dots,h_m \in \tilde Y$ such that
\begin{align*}
f(x)
&= \sum_{i=1}^m h_i k(x,\tilde{x}_i) = \sum_{i=1}^m h_i 
   \langle \phi(x), \phi(\tilde{x}_i) \rangle_{\mathcal{H}}.
\end{align*}
Note that $\phi(\tilde{x}_i) \in \mathcal{H}_m$, hence we have
\[
\langle \phi(x), \phi(\tilde{x}_i) \rangle_{\mathcal{H}}
=
\langle P\phi(x), \phi(\tilde{x}_i) \rangle_{\mathcal{H}},
\]
then it follows that
\[
f(x)
=
\sum_{i=1}^m h_i
\langle P\phi(x), \phi(\tilde{x}_i) \rangle_{\mathcal{H}}.
\]

\noindent
To show that $\widetilde{\mathcal{G}}_m \subseteq \mathcal G_m'$, consider a linear operator $C_m : \mathcal{H}_m \to \tilde Y$ defined as
\[
C_m(g)
=
\sum_{i=1}^m
h_i \,
\langle g, \phi(\tilde{x}_i) \rangle_{\mathcal{H}},
\qquad g \in \mathcal{H}_m. 
\]
Then
\[
C_m P\phi(x)
=
\sum_{i=1}^m
h_i
\langle P\phi(x), \phi(\tilde{x}_i) \rangle_{\mathcal{H}}
=
f(x),
\]
and therefore $f \in \mathcal{G}_m'$.\\

\noindent
Conversely, let $f \in \mathcal{G}_m'$. Then there exists an operator 
$C_m \in \mathcal{S}_2(\mathcal{H}_m, \tilde Y)$ such that
\[
f(x) = C_m P\phi(x).
\]
Since $
\mathcal{H}_m = \operatorname{span}\{\phi(\tilde{x}_1),\dots,\phi(\tilde{x}_m)\}
$ is finite-dimensional, every Hilbert--Schmidt operator on $C_m\in \mathcal{S}_2(\mathcal{H}_m,\tilde Y)$ admits a finite expansion. Hence there exist vectors $h_1,\dots,h_m \in \tilde Y$ such that
\[
C_m(g)
=
\sum_{i=1}^m
h_i \,
\langle g, \phi(\tilde{x}_i) \rangle_{\mathcal H},
\qquad \forall\, g \in \mathcal{H}_m,
\]
where each coefficient vector satisfies
\[
h_i = \sum_{j=1}^n a_{i,j}\, y_j.
\]
Applying this to $g = P\phi(x)$ yields
\begin{align*}
f(x) &= \sum_{i=1}^m h_i
\langle P\phi(x), \phi(\tilde{x}_i) \rangle_{\mathcal{H}} =
\sum_{i=1}^m h_i
\langle \phi(x), \phi(\tilde{x}_i) \rangle_{\mathcal{H}} =
\sum_{i=1}^m
h_i k(x,\tilde{x}_i) \in \tilde{\mathcal{G}}_{m}.
\end{align*}
\end{proof}

\noindent
Using Lemma \ref{Gm=Gm'lemma} along with Theorem \ref{vrkhs-isomorphism} and \eqref{p1}, we have

\begin{equation}\label{error_C}
\min_{f \in \tilde {\mathcal{G}}_m} \frac{1}{n} \sum_{i=1}^n \| f(x_i) - y_i \|_\mathcal Y^2 +\lambda \|f\|_\mathcal{G}^2 = \min_{C \in \mathcal{S}_2(\mathcal{H}_m,\tilde Y)} \frac{1}{n} \sum_{i=1}^n \| CP\phi(x_i) - y_i \|_\mathcal Y^2 +\lambda \|C\|_{\mathcal{S}_2(\mathcal{H},\mathcal{Y})}^2.
\end{equation}

\noindent
As we have converted our minimization problem over $\mathcal{S}_2(\mathcal{H}_m,\tilde Y)$ from $\mathcal{G}_{m}$, we provide with the estimator for \eqref{error_C} in the following lemma.

\begin{lemma}\label{reprenstorforproof}
The minimizer of the regularized empirical risk functional~\eqref{equation:emphiirical_Riskin C} is given by
\[
\widehat C_\lambda^P
=
\widehat{C}_{\mathcal Y,\mathcal X}^P
\bigl(\widehat{C}_{\mathcal X}^P+\lambda I\bigr)^{-1},
\]
where the empirical cross-covariance operator 
$\widehat{C}_{\mathcal Y,\mathcal X}^P$
and the empirical covariance operator 
$\widehat{C}_{\mathcal X}^P$
are defined by
\[
\widehat{C}_{\mathcal Y,\mathcal X}^P
=
\frac{1}{n}
\sum_{i=1}^n
y_i \otimes P\phi(x_i),
\qquad
\widehat{C}_{\mathcal X}^P
=
\frac{1}{n}
\sum_{i=1}^n
P\phi(x_i)\otimes P\phi(x_i).
\]
\end{lemma}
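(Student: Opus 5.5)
We treat the functional on the right-hand side of \eqref{error_C} as a strictly convex quadratic functional of the operator $C$, with the features $P\phi(x_i)$ in place of $\phi(x_i)$, and read off the minimizer from its first-order optimality condition.

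\emph{Step 1 (reformulation).} Using the tensor product identification, write $CP\phi(x_i)$ and expand
\[
\frac1n\sum_{i=1}^n\|CP\phi(x_i)-y_i\|_{\mathcal Y}^2
=\operatorname{tr}\bigl(C\widehat C_{\mathcal X}^P C^*\bigr)-2\langle C,\widehat C_{\mathcal Y,\mathcal X}^P\rangle_{\mathcal S_2}+\frac1n\sum_{i=1}^n\|y_i\|_{\mathcal Y}^2 .
\]
This rests on two facts. First, $\|CP\phi(x_i)\|^2=\langle C^*C,\,P\phi(x_i)\otimes P\phi(x_i)\rangle_{\mathcal S_2}$. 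Second, $\langle CP\phi(x_i),y_i\rangle=\langle C,\,y_i\otimes P\phi(x_i)\rangle_{\mathcal S_2}$, which follows from $\langle A,\,u\otimes v\rangle_{\mathcal S_2}=\langle Av,u\rangle$.

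\emph{Step 2 (Fréchet derivative).} Differentiate the functional $\mathcal R_z(C)$ in the Hilbert space $\mathcal S_2(\mathcal H,\mathcal Y)$. The gradient is
\[
2\bigl(C\widehat C_{\mathcal X}^P-\widehat C_{\mathcal Y,\mathcal X}^P+\lambda C\bigr).
\]
The functional is strictly convex and coercive because $\lambda>0$ and $\widehat C_{\mathcal X}^P\succeq0$, so setting the gradient to zero characterizes the unique minimizer:
\[
C(\widehat C_{\mathcal X}^P+\lambda I)=\widehat C_{\mathcal Y,\mathcal X}^P .
\]
Since $\widehat C_{\mathcal X}^P+\lambda I$ is boundedly invertible, this gives the claimed formula for $\widehat C^P_\lambda$.

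\emph{Step 3 (consistency with the constrained class).} We must verify that this unconstrained minimizer over $\mathcal S_2(\mathcal H,\mathcal Y)$ lies in $\mathcal S_2(\mathcal H_m,\widetilde{\mathcal Y})$, so that it also solves \eqref{error_C}. This is the step needing care.

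\begin{itemize}
\item \emph{Range.} The range of $\widehat C_{\mathcal Y,\mathcal X}^P$ is contained in $\operatorname{span}\{y_i\}=\widetilde{\mathcal Y}$.
\item \emph{Domain.} $\widehat C_{\mathcal X}^P$ maps into $\mathcal H_m$ and vanishes on $\mathcal H_m^\perp$, so it commutes with $P$. Hence $(\widehat C_{\mathcal X}^P+\lambda I)^{-1}$ maps $\mathcal H_m$ to itself and acts as $\lambda^{-1}I$ on $\mathcal H_m^\perp$. Since $\widehat C_{\mathcal Y,\mathcal X}^P$ vanishes on $\mathcal H_m^\perp$ ($u\otimes Pv$ kills $\mathcal H_m^\perp$), it follows that $\widehat C_\lambda^P=\widehat C_\lambda^P P$.
\end{itemize}

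So $\widehat C_\lambda^P$ is effectively an operator from $\mathcal H_m$ into $\widetilde{\mathcal Y}$, and its Hilbert--Schmidt norm on $\mathcal H$ equals its norm on $\mathcal H_m$. Finally, any minimizer over the full space that happens to lie in the subclass minimizes over the subclass, so $\widehat C_\lambda^P$ solves both problems. We will also note that the extension of $C\in\mathcal S_2(\mathcal H_m,\widetilde{\mathcal Y})$ by zero on $\mathcal H_m^\perp$ preserves both the risk and the norm, which justifies identifying the two problems.
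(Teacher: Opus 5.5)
Your proof is correct and follows the same route as the paper: you expand the quadratic risk in $\mathcal S_2$ using $\langle a, Hb\rangle_{\mathcal Y}=\langle H, a\otimes b\rangle_{\mathcal S_2}$, set the Fr\'echet derivative to zero to get $C(\widehat C_{\mathcal X}^P+\lambda I)=\widehat C_{\mathcal Y,\mathcal X}^P$, and invert. Your Step~3 shows that $\widehat C_\lambda^P=\widehat C_\lambda^P P$ with range in $\widetilde{\mathcal Y}$, so the critical point is admissible for \eqref{error_C}; this fills a gap the paper passes over, since it differentiates only along $H\in\mathcal S_2(\mathcal H_m,\widetilde{\mathcal Y})$ yet reads off the full operator equation.
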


\begin{proof}
Let
\[
T(C)
=
\frac{1}{n}
\sum_{i=1}^n
\| C P\phi(x_i) - y_i \|_\mathcal Y^2 +\lambda \|C\|_{\mathcal{S}_2(\mathcal{H},\mathcal{Y})}^2,
\qquad 
C \in \mathcal{S}_2(\mathcal{H}_m,\tilde Y) .
\]

\noindent
For a perturbation $H \in \mathcal{S}_2(\mathcal{H}_m,\tilde Y)$, consider
\begin{align*}
T(C+H) - T(C)
&=
\frac{1}{n}
\sum_{i=1}^n
\Big(
\| (C+H)P\phi(x_i) - y_i \|_\mathcal Y^2
-
\| CP\phi(x_i) - y_i \|_\mathcal Y^2
\Big) +\lambda \|C+H\|_{\mathcal{S}_2(\mathcal{H},\mathcal{Y})}^2-\lambda\|C\|_{\mathcal{S}_2(\mathcal{H},\mathcal{Y})}^2 .
\end{align*}

\noindent
Expanding the square yields
\begin{align*}
T(C+H) - T(C)
&=
\frac{2}{n}
\sum_{i=1}^n
\langle CP\phi(x_i) - y_i,\,
H P\phi(x_i) \rangle_\mathcal Y +2\lambda\langle C,H\rangle_{\mathcal{S}_2(\mathcal{H},\mathcal{Y})}
+ \mathcal{O}(\|H\|^2).
\end{align*}

\noindent
Using the identity
\[
\langle a,\, Hb \rangle_\mathcal Y
=
\langle H,\, a \otimes b \rangle_{\mathcal{S}_2(\mathcal{H}, \mathcal Y)},
\]
we obtain
\begin{align*}
T(C+H) - T(C)
&=
\frac{2}{n}
\sum_{i=1}^n
\Big\langle
H,\,
(CP\phi(x_i) - y_i)\otimes P\phi(x_i)
\Big\rangle_{\mathcal{S}_2(\mathcal{H},\mathcal Y)} +2\lambda\langle H,C\rangle_{\mathcal{S}_2(\mathcal{H},\mathcal{Y})}
+ \mathcal{O}(\|H\|^2).
\end{align*}

\noindent
Hence, taking the Fréchet derivative of $\mathcal{R}_\mathbf{z}(C)$ and setting it equal to zero, we obtain
\[
\frac{1}{n}
\sum_{i=1}^n
CP\phi(x_i) \otimes P\phi(x_i) + \lambda C
=
\frac{1}{n}
\sum_{i=1}^n
y_i \otimes P\phi(x_i).
\]

\noindent
Thus, we obtain
\[
C (\widehat{C}_\mathcal{X}^P +\lambda I )
= \widehat{C}_{\mathcal Y, \mathcal X}^P.
\]

\noindent
Hence, the minimizer of the empirical risk satisfies the desired equation.

\end{proof}

\noindent
We now present two forms of our operator learning algorithm for Nyström subsampling, both of which reduce time and memory complexity while matching the accuracy of full KRR.

\vspace{0.6em}

\begin{proposition}[Repersenter Theorem for Nyström subsampling]
   Let $\mathbf{K}$ denote the Gram matrix associated with the scalar-valued kernel $k$, where $\mathbf{K} = \bigl(k(x_i, x_j)\bigr)_{i,j=1}^{n}$, and
    \[
        K_{n,m} = \left(k(x_i,\tilde{x_j})\right)_{i,j=1}^{n,m}.
    \]
   Let $\mathbf{y} = (y_1, \ldots, y_n)$ and define the norm $\|\cdot\|_n$ by
\[
\|\mathbf{y}\|_n^2 = \frac{1}{n} \sum_{i=1}^n \|y_i\|_{\mathcal{Y}}^2.
\]
Then the estimator $\widehat{f}_{m,\lambda}$ admits the representation
    \begin{equation}
        \widehat{f}_{m,\lambda}(x)
        \;=\;
 \sum_{j=1}^{m} \tilde b_j\, k_{\tilde{x}_j},
        \qquad
        \mathbf{b} = (\tilde{b}_1, \tilde{b}_2, \ldots, \tilde{b}_m)
        \;=\;
        \left( K^\top_{n,m} K_{n,m} +\lambda  I \right)^{-1} K^\top_{n,m} \mathbf{y}.
        \label{eq:nysrepresenter}
    \end{equation}
\end{proposition}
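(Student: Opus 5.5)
The plan is to reduce the infinite-dimensional problem \eqref{p1} to a finite linear system with coefficients in $\widetilde Y$, using Lemma~\ref{Gm=Gm'lemma} together with Lemma~\ref{reprenstorforproof}. By Lemma~\ref{Gm=Gm'lemma}, every $f\in\widetilde{\mathcal G}_m$ has the form $f=\sum_{j=1}^m k_{\tilde x_j}\,\tilde b_j$ with $\tilde b_j\in\widetilde Y$. The minimizer can therefore be sought in this parametrization, and the claimed representation of $\widehat f_{m,\lambda}$ holds once the coefficient vector $\mathbf b\in\widetilde Y^m$ is identified. Writing $K_{m,m}=(k(\tilde x_j,\tilde x_l))_{j,l=1}^m$, the reproducing property gives
\[
f(x_i)=\sum_{j=1}^m (K_{n,m})_{ij}\,\tilde b_j,\qquad \|f\|_{\mathcal G}^2=\sum_{j,l=1}^m (K_{m,m})_{jl}\,\langle \tilde b_j,\tilde b_l\rangle_{\mathcal Y}.
\]
Hence the objective in \eqref{p1} becomes the quadratic functional
\[
\mathbf b\mapsto \tfrac1n\|K_{n,m}\mathbf b-\mathbf y\|^2+\lambda\langle \mathbf b, K_{m,m}\mathbf b\rangle
\]
on the Hilbert space $\widetilde Y^m$. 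Here scalar matrices act on $\widetilde Y^m$ in the natural way, i.e.\ as $A\otimes Id_{\mathcal Y}$.

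Next I would take the Fréchet derivative exactly as in the proof of Lemma~\ref{reprenstorforproof}, or equivalently test against the perturbations $\mathbf b+t\,e_j h$ with $h\in\widetilde Y$. This yields the normal equations
\[
\bigl(K_{n,m}^\top K_{n,m}+\lambda n K_{m,m}\bigr)\mathbf b=K_{n,m}^\top\mathbf y .
\]
Since the functional is convex, any solution of these equations is a minimizer. Because the matrices are scalar and act componentwise on $\mathcal Y$, the system is solved by the same real matrix that would solve the scalar problem, applied to $\mathbf y$ coordinatewise in $\widetilde Y$. In particular $\mathbf b$ automatically lies in $\widetilde Y^m$. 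If $K_{m,m}$ is singular, I would use a pseudo-inverse and note that the resulting $f$ does not depend on the choice of solution, since the kernel of $K_{m,m}$ contributes nothing to $f$.

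To reach the precise form stated in \eqref{eq:nysrepresenter}, with an identity matrix in the regularizer, I would pass to orthonormal coordinates on $\mathcal H_m$ in the spirit of Lemma~\ref{reprenstorforproof}. The coordinate vector of $P\phi(x_i)$ in the basis $\{K_{m,m}^{-1/2}\}$-whitened sections is the $i$-th row of $K_{n,m}K_{m,m}^{-1/2}$. An operator $C\in\mathcal S_2(\mathcal H_m,\widetilde Y)$ is then a vector $\mathbf c\in\widetilde Y^m$ with $\|C\|_{\mathcal S_2}=\|\mathbf c\|$. With $\mathbf c=K_{m,m}^{1/2}\mathbf b$, the functional becomes a standard ridge problem, and its solution is
\[
\mathbf c=(A^\top A+\lambda n I)^{-1}A^\top\mathbf y,\qquad A=K_{n,m}K_{m,m}^{-1/2}.
\]
This is the identity-regularized expression of the statement, with the design matrix read in these coordinates and the factor $n$ absorbed into $\lambda$ through the $\|\cdot\|_n$ normalization.

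I expect the main obstacle to be this final bookkeeping: matching the paper's displayed formula, which uses $K_{n,m}$ and $\lambda I$, with the normal equations, which naturally produce $\lambda n K_{m,m}$. I would first derive the $K_{m,m}$-weighted system rigorously, then show the equivalence through the whitening change of variables and the rescaling $\lambda\leftrightarrow n\lambda$. I would state explicitly which convention makes the displayed formula literal.
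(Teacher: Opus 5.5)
Your reduction is correct up to the normal equations. With $f=\sum_j k_{\tilde x_j}\tilde b_j$, the objective in \eqref{p1} is $\frac1n\|K_{n,m}\mathbf b-\mathbf y\|^2+\lambda\,\mathbf b^\top K_{m,m}\mathbf b$, and its minimizers are the solutions of $(K_{n,m}^\top K_{n,m}+n\lambda K_{m,m})\mathbf b=K_{n,m}^\top\mathbf y$. Your remarks on $\widetilde Y$-valued coefficients and on singular $K_{m,m}$ are also fine.

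The gap is the last step, and it cannot be closed. The whitening $\mathbf c=K_{m,m}^{1/2}\mathbf b$ gives $\mathbf c=(A^\top A+n\lambda I)^{-1}A^\top\mathbf y$ with $A=K_{n,m}K_{m,m}^{-1/2}$. The estimator is then $\sum_l c_l e_l$ with $e_l=\sum_j (K_{m,m}^{-1/2})_{jl}k_{\tilde x_j}$. That uses a different design matrix and different coefficients from the display, so ``reading the design matrix in these coordinates'' replaces the claim rather than proving it.

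The factor $n$ can indeed be absorbed into $\lambda$, but the discrepancy between $K_{m,m}$ and $I$ cannot. The displayed $\mathbf b$ minimizes $\sum_i\|y_i-[K_{n,m}\mathbf b]_i\|_{\mathcal Y}^2+\lambda\sum_j\|\tilde b_j\|_{\mathcal Y}^2$, a Euclidean penalty on the coefficients, whereas $\|f\|_{\mathcal G}^2=\mathbf b^\top K_{m,m}\mathbf b$. Concretely, take the subsample to be the whole training set with $\mathbf K$ invertible. Then $\widehat f_{m,\lambda}$ must coincide with the full estimator \eqref{eq:representer2}, whose coefficients are $(\mathbf K+n\lambda I)^{-1}\mathbf y$, while the display gives $(\mathbf K^2+\lambda I)^{-1}\mathbf K\mathbf y$. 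On an eigenvalue $s$ of $\mathbf K$ these act as $1/(s+n\lambda)$ and $s/(s^2+\lambda)$. Matching them for every eigenvalue would require a rescaled $\lambda'=n\lambda s$ for each $s$, which is impossible unless $\mathbf K$ is a multiple of the identity. So the display, read literally, does not describe the minimizer of \eqref{p1}.

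The paper's own proof reaches the display only by deriving the unregularized equations $K_{n,m}^\top K_{n,m}\mathbf b=K_{n,m}^\top\mathbf y$ and then ``incorporating Tikhonov regularization'' by adding $\lambda I$. That is exactly the coefficient-penalty problem above, not \eqref{p1}. The $\mathcal G$-norm version appears as the paper's next proposition, with $\lambda K_{m,m}$; under the $\frac1n$-normalized loss the correct factor there is $n\lambda$, as you found. So do not try to force your system into the displayed form. Either state and prove the $K_{m,m}$-weighted formula, which your argument already establishes, or make the coefficient penalty explicit in the definition of the estimator. In the latter case the same Fr\'echet-derivative computation yields the display in one line.
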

    
\begin{proof}
\noindent
Let
\[
    T(\mathbf{b}) 
    = 
    \frac{1}{n} \sum_{i=1}^n 
    \big\|  
        y_i - 
       \Big( \sum_{j=1}^m \tilde{b}_j k(\cdot, \tilde{x}_j) \Big) x_i 
    \big\|_\mathcal{Y}^2  ,
\]

\noindent
which can be written as

\[
    T(\mathbf{b}) 
    = 
    \frac{1}{n} \sum_{i=1}^n  
    \big\|  
        y_i -   [K_{n,m}\,\mathbf{b}]_i 
    \big\|_\mathcal{Y}^2.
\]

\noindent
Consider
\begin{align*}
    T(\mathbf{b+h}) - T(\mathbf{b})
    &= 
    \frac{1}{n} \sum_{i=1}^n  
    \big\|  
        y_i -   [K_{n,m}(\mathbf{b+h})]_i 
    \big\|_\mathcal{Y}^2
    -
    \frac{1}{n} \sum_{i=1}^n  
    \big\|  
        y_i -   [K_{n,m}\mathbf{b}]_i 
    \big\|_\mathcal{Y}^2 \\[4pt]
    &=
    \big\|  
        \mathbf{y} -  [K_{n,m}(\mathbf{b+h})] 
    \big\|_n^2 
    -  
    \big\|  
        \mathbf{y} -  [K_{n,m}\mathbf{b}] 
    \big\|_n^2  \\[4pt] 
    &=
    \big\|   [K_{n,m}\mathbf{h}] 
    \big\|_n^2 
    -  
    2\big\langle  
        \mathbf{y} -  [K_{n,m}\mathbf{b}] ,  [K_{n,m}\mathbf{h}] 
    \big\rangle_n .
\end{align*}

\noindent
Hence, taking the Fréchet derivative of $T(\mathbf{b})$ and setting it equal to zero, we obtain
\[
    \left(K^\top_{n,m}K_{n,m} \right)\mathbf{b} 
    =  
    K^\top_{n,m} \mathbf{y}.
\]

\noindent
Incorporating Tikhonov regularization into the above expression gives the desired result.

\end{proof}

\begin{proposition}[Representer Theorem for Nyström Subsampling]
    Let $\mathbf{K}$ denote the Gram matrix associated with the scalar-valued kernel $k$, where
    $\mathbf{K} = \bigl(k(x_i, x_j)\bigr)_{i,j=1}^{n}$, and
    \[
        K_{n,m} = \left(k(x_i,\tilde{x}_j)\right)_{i,j=1}^{n,m}, 
        \qquad
        K_{m,m} = \left(k(\tilde{x}_i,\tilde{x}_j)\right)_{i,j=1}^{m}.
    \]
    Let $\mathbf{y} = (y_1, \ldots, y_n)$ and define the norm $\|\cdot\|_n$ by
    \[
        \|\mathbf{y}\|_n^2 = \frac{1}{n} \sum_{i=1}^n \|y_i\|_{\mathcal{Y}}^2.
    \]
    Then the estimator $\widehat{f}_{m,\lambda}$ minimizing
    \[
        \frac{1}{n}\sum_{i=1}^{n}\left\|y_i - f(x_i)\right\|_{\mathcal{Y}}^2 
        + \lambda\,\|f\|_{\mathcal{G}}^2
    \]
    over $f \in \mathrm{span}\{k_{\tilde{x}_1},\ldots,k_{\tilde{x}_m}\}$
    admits the representation
    \begin{equation}
        \widehat{f}_{m,\lambda}(x)
        \;=\;
        \sum_{j=1}^{m} \tilde{b}_j\, k_{\tilde{x}_j},
        \qquad
        \mathbf{b} = (\tilde{b}_1, \tilde{b}_2, \ldots, \tilde{b}_m)
        \;=\;
        \left(K^\top_{n,m} K_{n,m} + \lambda\, K_{m,m}\right)^{-1} K^\top_{n,m} \mathbf{y}.
        \label{eq:nysrepresenter_rkhs}
    \end{equation}
\end{proposition}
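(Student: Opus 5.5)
We parametrize candidates as $f = \sum_{j=1}^m k_{\tilde x_j}\,\tilde b_j$ with coefficients $\tilde b_j \in \mathcal{Y}$; by Lemma~\ref{Gm=Gm'lemma} we may take $\tilde b_j \in \widetilde{Y}$. We then rewrite both terms of the objective as quadratic forms in $\mathbf{b}=(\tilde b_1,\dots,\tilde b_m)\in\mathcal{Y}^m$ and set the Fréchet derivative to zero, as in the proof of the preceding proposition. The difference from that proposition is that the penalty is now the actual $\mathcal{G}$-norm rather than the Euclidean norm of $\mathbf{b}$, and this is what produces $K_{m,m}$.

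\emph{Data-fit term.} Evaluating at $x_i$ gives $f(x_i)=\sum_j k(x_i,\tilde x_j)\tilde b_j = [K_{n,m}\mathbf{b}]_i$, where a scalar matrix acts on a $\mathcal{Y}$-valued vector entrywise by linear combination. Hence the data-fit term equals $\|\mathbf{y}-K_{n,m}\mathbf{b}\|_n^2$.

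\emph{Penalty term.} Using the inner product defining $\mathcal{G}$ for the multiplicative kernel \eqref{multk},
\[
\|f\|_{\mathcal G}^2=\sum_{j,l}\langle \tilde b_j, k(\tilde x_j,\tilde x_l)\tilde b_l\rangle_{\mathcal Y}=\langle \mathbf{b},K_{m,m}\mathbf{b}\rangle_{\mathcal{Y}^m},
\]
where $\langle \mathbf{a},\mathbf{c}\rangle_{\mathcal Y^m}=\sum_j\langle a_j,c_j\rangle_{\mathcal Y}$. Equivalently, Theorem~\ref{vrkhs-isomorphism} gives $C=\sum_j \tilde b_j\otimes\phi(\tilde x_j)$ with $\|C\|_{\mathcal S_2}^2$ equal to the same expression.

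\emph{First-order condition.} Perturb $\mathbf{b}\mapsto \mathbf{b}+\mathbf{h}$ and expand. The linear part is
\[
-\tfrac{2}{n}\langle K_{n,m}^\top(\mathbf{y}-K_{n,m}\mathbf{b}),\mathbf{h}\rangle_{\mathcal Y^m}+2\lambda\langle K_{m,m}\mathbf{b},\mathbf{h}\rangle_{\mathcal Y^m}.
\]
Here we use that a real scalar matrix $A$ acting on $\mathcal{Y}^m$ has adjoint $A^\top$, which holds because the $\mathcal{Y}$ inner product is bilinear over $\mathbb R$. Setting this to zero for all $\mathbf{h}$ gives
\[
(K_{n,m}^\top K_{n,m}+\lambda n K_{m,m})\mathbf{b}=K_{n,m}^\top\mathbf{y}.
\]
The factor $n$ is then absorbed into $\lambda$, or into the normalization of $\|\cdot\|_n$, as the statement implicitly does in the preceding proposition. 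The functional is a convex quadratic, since $K_{m,m}$ and $K_{n,m}^\top K_{n,m}$ are positive semidefinite, so any stationary point is a minimizer.

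\emph{Main obstacle: invertibility and uniqueness.} The matrix $K_{n,m}^\top K_{n,m}+\lambda K_{m,m}$ may be singular when $K_{m,m}$ is singular, for instance with repeated or linearly dependent landmarks. I would handle this in three steps. First, note that $f$ depends on $\mathbf{b}$ only modulo $\ker K_{m,m}$. Second, observe that $\ker K_{m,m}\subseteq \ker K_{n,m}$: if $K_{m,m}\mathbf{c}=0$ then $\sum_j c_j\phi(\tilde x_j)=0$, so $K_{n,m}\mathbf{c}=0$. Third, conclude that the normal equations are always solvable, and that every solution (for example via the Moore--Penrose pseudoinverse) yields the same function $\widehat f_{m,\lambda}$, since the functional is strictly convex in $f$ thanks to the $\lambda\|f\|^2_{\mathcal G}$ term. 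When $K_{m,m}$ is invertible the matrix is positive definite and the formula \eqref{eq:nysrepresenter_rkhs} holds literally.
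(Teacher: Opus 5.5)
Your proposal is correct and follows essentially the same route as the paper: write $\|f\|_{\mathcal G}^2$ as the quadratic form $\langle \mathbf{b}, K_{m,m}\mathbf{b}\rangle$ via the reproducing property, expand $T(\mathbf{b}+\mathbf{h})-T(\mathbf{b})$, and set the part linear in $\mathbf{h}$ to zero to obtain the normal equations. Your version is more careful than the paper's in two respects: the paper's proof silently drops the $1/n$ carried by $\langle\cdot,\cdot\rangle_n$, so the correct normal equations have $n\lambda K_{m,m}$ rather than $\lambda K_{m,m}$, exactly as you derive; and the paper never addresses minimality or the invertibility of $K_{n,m}^\top K_{n,m}+\lambda K_{m,m}$, both of which your convexity argument and the inclusion $\ker K_{m,m}\subseteq\ker K_{n,m}$ settle.
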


\begin{proof}
\noindent
Since $f = \sum_{j=1}^{m} \tilde{b}_j\, k_{\tilde{x}_j}$, the reproducing property of $\mathcal{G}$ gives
\[
    \|f\|_{\mathcal{G}}^2
    =
    \left\langle
        \sum_{j=1}^{m} \tilde{b}_j\, k_{\tilde{x}_j},\,
        \sum_{l=1}^{m} \tilde{b}_l\, k_{\tilde{x}_l}
    \right\rangle_{\mathcal{G}}
    =
    \sum_{j,l=1}^{m} \tilde{b}_j\,\tilde{b}_l\, k(\tilde{x}_j, \tilde{x}_l)
    =
    \mathbf{b}^\top K_{m,m}\, \mathbf{b}.
\]

\noindent
Define the regularized objective
\[
    T(\mathbf{b})
    =
    \frac{1}{n} \sum_{i=1}^n
    \big\|
        y_i - [K_{n,m}\,\mathbf{b}]_i
    \big\|_{\mathcal{Y}}^2
    +
    \lambda\, \mathbf{b}^\top K_{m,m}\, \mathbf{b},
\]
which can be written as
\[
    T(\mathbf{b})
    =
    \big\|\mathbf{y} - K_{n,m}\,\mathbf{b}\big\|_n^2
    +
    \lambda\, \mathbf{b}^\top K_{m,m}\, \mathbf{b}.
\]

\noindent
Consider
\begin{align*}
    T(\mathbf{b+h}) - T(\mathbf{b})
    &=
    \big\|\mathbf{y} - K_{n,m}(\mathbf{b+h})\big\|_n^2
    +
    \lambda\,(\mathbf{b+h})^\top K_{m,m}(\mathbf{b+h})
    \\
    &\quad
    -\,
    \big\|\mathbf{y} - K_{n,m}\,\mathbf{b}\big\|_n^2
    -
    \lambda\,\mathbf{b}^\top K_{m,m}\,\mathbf{b}.
\end{align*}

\noindent
Expanding the term,
\[
    \big\|\mathbf{y} - K_{n,m}(\mathbf{b+h})\big\|_n^2
    -
    \big\|\mathbf{y} - K_{n,m}\,\mathbf{b}\big\|_n^2
    =
    \big\|K_{n,m}\mathbf{h}\big\|_n^2
    -
    2\big\langle \mathbf{y} - K_{n,m}\,\mathbf{b},\, K_{n,m}\mathbf{h}\big\rangle_n.
\]

\noindent
Expanding the regularization term, using symmetry of $K_{m,m}$,
\[
    (\mathbf{b+h})^\top K_{m,m}(\mathbf{b+h}) - \mathbf{b}^\top K_{m,m}\mathbf{b}
    =
    2\,\mathbf{b}^\top K_{m,m}\mathbf{h}
    +
    \mathbf{h}^\top K_{m,m}\mathbf{h}.
\]

\noindent
Hence,
\[
    T(\mathbf{b+h}) - T(\mathbf{b})
    =
    \big\|K_{n,m}\mathbf{h}\big\|_n^2
    +
    \lambda\,\mathbf{h}^\top K_{m,m}\mathbf{h}
    -
    2\big\langle \mathbf{y} - K_{n,m}\,\mathbf{b},\, K_{n,m}\mathbf{h}\big\rangle_n
    +
    2\lambda\,\mathbf{b}^\top K_{m,m}\mathbf{h}.
\]

\noindent
The terms $\|K_{n,m}\mathbf{h}\|_n^2$ and $\lambda\,\mathbf{h}^\top K_{m,m}\mathbf{h}$ are both $O(\|\mathbf{h}\|^2)$.
Taking the Fréchet derivative (the linear part in $\mathbf{h}$) and setting it equal to zero gives
\[
    K_{n,m}^\top\!\left(\mathbf{y} - K_{n,m}\,\mathbf{b}\right)
    =
    \lambda\, K_{m,m}\,\mathbf{b},
\]
which rearranges to
\[
    \left(K_{n,m}^\top K_{n,m} + \lambda\, K_{m,m}\right)\mathbf{b}
    =
    K_{n,m}^\top\,\mathbf{y}.
\]

\end{proof}

\section{CONVERGENCE ANALYSIS OF OPERATOR LEARNING VIA NYSTR\"OM SUBSAMPLING}\label{section:3}

\noindent
In this section, we provide a theoretical analysis of the proposed framework for Tikhonov regularization under a general source condition. Before stating the key assumptions and lemmas of our analysis, we define the covariance operator
$C_\mathcal X : \mathcal{H} \to \mathcal{H}$ by
\[
C_\mathcal X
:= \mathbb{E}\!\left[(\phi(x)\otimes\phi(x))\right],
\]
where the expectation is taken with respect to the
marginal distribution $\rho_\mathcal X$.

\begin{assumption}\label{kernel-bound}
Let $k :\mathcal  X \times\mathcal  X \to \mathbb{R}$ be the reproducing kernel of a scalar-valued RKHS $\mathcal{H}$, satisfying
\[
k(x,x) \le \kappa^2 \quad \mathrm{for\ all}\ x \in \mathcal X .
\]
\end{assumption}

\begin{assumption}\label{bound-Y-asssumption}
There exists a constant $M' > 0$ such that,  $\|y\|_\mathcal Y \le M'$ for all $y \in\mathcal  Y$.

\end{assumption}

To analyze error bounds and establish faster learning rates for learning algorithms, it is essential to exploit the smoothness of the target function, typically characterized via a source condition. In this work, we consider a general source condition, as considered in \cite{holzleitner2026regularizedlearningfunctionaldata}, which extends and generalizes the classical Hölder-type source condition.

{

\begin{assumption}{(General source condition)}\label{assump:source}
{
We assume that the operator $C_\ast$ belongs to the Hilbert--Schmidt class
$\mathcal S_2(\mathcal H^{\varphi},\mathcal Y)$, where $\varphi$ is the index function and  the space $\mathcal H^{\varphi}$ is defined as
\[
\mathcal H^{\varphi}
:= \left\{ f \in \mathcal H \;:\; f = \varphi(C_\mathcal X) g \quad \mathrm{for \, some } \; g \in \mathcal H \right\}.
\]
}
\end{assumption}

\noindent
{
Note that the space $\mathcal H^{\varphi}$ is a Hilbert space when endowed with the inner product
\[
\big\langle \varphi(C_\mathcal X) g_1,\, \varphi(C_\mathcal X) g_2 \big\rangle_{\mathcal H^{\varphi}}
:= \langle g_1, g_2 \rangle_{\mathcal H},
\qquad g_1, g_2 \in \mathcal H.
\]
}
{
Consequently, the family $
\{\varphi(\mu_i) f_i\}_{i \ge 1}$
forms an orthonormal basis of $\mathcal H^{\varphi}$.
}

\begin{definition}\label{qualfication_covering}
We say that the qualification of Tikhonov regularization  \emph{covers} the index function $\varphi$, if there exists a constant $c>0$ such that
\begin{equation}\label{eq:cover}
c\,\frac{t^{}}{\varphi(t)}
\;\le\;
\inf_{\,t \le \sigma \le s}
\frac{\sigma}{\varphi(\sigma)},
\end{equation}
for all $0 < t \le s$.
Here, $\varphi$ is an index function, i.e., $\varphi:[0,\infty)\to[0,\infty)$ is continuous, strictly increasing, and satisfies $\varphi(0)=0$.
\end{definition}

\begin{assumption}\label{eigendecay}
Let $\{(\mu_i,f_i)\}_{i\in\mathbb{N}}$ be the eigenpairs of $C_\mathcal X$ with strictly positive eigenvalues $\mu_i>0$, and corresponding eigenfunctions $\{f_i\}_{i\in\mathbb{N}}$ forming an orthonormal basis of $\mathcal H$. Assume that for some $\mathbf{b} > 1$, the eigenvalues satisfy
\[
i^{-\mathbf{b}} \;\lesssim\; \mu_i \;\lesssim\; i^{-\mathbf{b}},
\qquad \forall\, i \in \mathbb{N}.
\]
\end{assumption}

\noindent
This assumption characterizes the polynomial decay of the integral operator's eigenvalues, which in turn allows us to control the \emph{effective dimension} 
\[
\mathcal{N}(\lambda) 
:= 
{Tr}\!\left(C_\mathcal X\big(C_\mathcal X+ \lambda I\big)^{-1}
\right),
\]
which, under the above condition, admits the bound
\begin{center}
$\mathcal{N}(\lambda) \;\lesssim\; \lambda^{-\frac{1}{\mathbf{b}}}.
$
\end{center}

\begin{lemma}[\cite{Pinelis1985,yurinsky1995sums}]\label{PINELIS:INEQUALITY} 
Let $(\Omega,\mathcal{F},\mathbb{P})$ be a probability space and let $\xi$ be a random variable on $\Omega$ taking values in a real separable Hilbert space $\mathcal{H}$. 
Assume that there exist two positive constants $L$ and $\sigma$ such that

\begin{equation}\label{moment-equation}
\mathbb{E}\!\left[\big\|\xi-\mathbb{E}[\xi]\big\|_{\mathcal{H}}^{m}\right]
\le \frac{1}{2} m!\,\sigma^{2} L^{m-2},
\qquad \forall\, m \ge 2 .
\end{equation}
Let $\xi_1,\ldots,\xi_n$ be i.i.d.\ copies of $\xi$. Then, for all $n \in \mathbb{N}$ and $0<\delta<1$, it holds that

\[
\mathbb{P}
\left(
\left\|
\frac{1}{n}\sum_{i=1}^{n} \xi_i - \mathbb{E}[\xi]
\right\|_{\mathcal{H}}
\le
2\left(\frac{L}{n} + \frac{\sigma}{\sqrt{n}}\right)
\log \left(\frac{2}{\delta}\right)
\right)
\ge 1-\delta .
\]
Moreover, the moment condition~(\ref{moment-equation}) holds provided that

\[
\|\xi(\omega)\|_{\mathcal{H}} \le \frac{L}{2} \quad {almost \,surely},
\quad {and} \quad
\mathbb{E}\!\left[\|\xi\|_{\mathcal{H}}^{2}\right] \le \sigma^{2}.
\]
\end{lemma}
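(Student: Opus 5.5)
We prove the Pinelis--Yurinsky concentration inequality (Lemma~\ref{PINELIS:INEQUALITY}) in two parts. The main part is the tail bound under the Bernstein moment condition~\eqref{moment-equation}. The second part checks that boundedness plus a second-moment bound implies that condition.

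For the tail bound, set $\eta_i = \xi_i - \mathbb{E}[\xi]$, which are i.i.d., centered and $\mathcal{H}$-valued. Let $S_n = \sum_{i=1}^n \eta_i$. We follow Pinelis's martingale approach in $(2,1)$-smooth Banach spaces, which here are Hilbert spaces.
\begin{enumerate}
\item Introduce the martingale $S_k$ with respect to the filtration $\mathcal{F}_k=\sigma(\eta_1,\dots,\eta_k)$.
\item Use the function $u\mapsto \cosh(t\|u\|)$. In a Hilbert space the second derivative of $\|u\|$ along direction $h$ is controlled by $\|h\|^2/\|u\|$.
\item From this, derive the one-step bound
\[
\mathbb{E}\big[\cosh(t\|S_{k}\|)\mid \mathcal{F}_{k-1}\big]\le \cosh(t\|S_{k-1}\|)\,\mathbb{E}\big[e^{t\|\eta_k\|}-t\|\eta_k\|\big].
\]
This is where $\mathbb{E}[\eta_k\mid\mathcal{F}_{k-1}]=0$ is used to kill the linear term.
\item Iterate and apply Markov's inequality:
\[
\mathbb{P}(\|S_n\|\ge r)\le 2\exp(-tr)\prod_{k}\mathbb{E}\big[e^{t\|\eta_k\|}-t\|\eta_k\|\big].
\]
\end{enumerate}
The moment condition gives, for $0<t<1/L$,
\[
\mathbb{E}\big[e^{t\|\eta\|}-t\|\eta\|\big]\le 1+\sum_{m\ge2}\tfrac{t^m}{2}\sigma^2L^{m-2}\le \exp\!\Big(\frac{t^2\sigma^2}{2(1-tL)}\Big).
\]
Hence
\[
\mathbb{P}(\|S_n\|\ge r)\le 2\exp\!\Big(-tr+\frac{nt^2\sigma^2}{2(1-tL)}\Big).
\]
Optimizing over $t$ with the standard choice $t=r/(n\sigma^2+rL)$ yields the Bernstein form
\[
2\exp\!\Big(-\frac{r^2}{2(n\sigma^2+rL)}\Big).
\]

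Next, normalize by $n$ and set the right-hand side equal to $\delta$. With $\varepsilon=\log(2/\delta)$, solve the quadratic
\[
r^2=2\varepsilon(n\sigma^2+rL)
\]
for $r$, which gives $r\le 2L\varepsilon+\sigma\sqrt{2n\varepsilon}$. Divide by $n$ and use $\sqrt{2\varepsilon}\le 2\varepsilon$. This last step needs $\varepsilon\ge1/2$, which holds because $\delta<1$ implies $\log(2/\delta)>\log 2>1/2$. The result is
\[
2\Big(\frac{L}{n}+\frac{\sigma}{\sqrt n}\Big)\log\frac{2}{\delta},
\]
which is the claimed bound.

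For the sufficient condition, suppose $\|\xi\|\le L/2$ almost surely.
\begin{itemize}
\item Since $\|\mathbb{E}\xi\|\le L/2$, we get $\|\xi-\mathbb{E}\xi\|\le L$.
\item The variance satisfies $\mathbb{E}\|\xi-\mathbb{E}\xi\|^2=\mathbb{E}\|\xi\|^2-\|\mathbb{E}\xi\|^2\le\sigma^2$.
\item Therefore
\[
\mathbb{E}\|\xi-\mathbb{E}\xi\|^m\le L^{m-2}\sigma^2\le \tfrac12 m!\,\sigma^2L^{m-2}\qquad\text{for } m\ge2,
\]
since $m!/2\ge1$.
\end{itemize}

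The main obstacle is the martingale step for the $\cosh$ functional. It requires a careful Taylor expansion of $\cosh(t\|u+h\|)$ in a Hilbert space, including the non-smooth point $u=0$, and that expansion is where the factor $2$ in front of the exponential originates. If the expansion becomes unwieldy, I would instead cite Pinelis~\cite{Pinelis1985}, Theorem~3.3, directly for the Bernstein-type tail bound and only carry out the inversion computation above.
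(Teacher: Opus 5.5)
Your proposal is correct and follows the same route as the paper, which gives no proof of its own and only cites Pinelis and Yurinsky. Your argument reconstructs that cited result: Pinelis's $\cosh$-martingale bound $\mathbb{E}\cosh(t\|S_n\|)\le\prod_k\mathbb{E}[e^{t\|\eta_k\|}-t\|\eta_k\|]$, then the Bernstein tail $2\exp(-r^2/(2(n\sigma^2+rL)))$, then the inversion using $\log(2/\delta)>1/2$. Two small corrections: the factor $2$ comes from $1/\cosh(tr)\le 2e^{-tr}$ in the Markov step, not from the Taylor expansion; and $u\mapsto\cosh(t\|u\|)$ is smooth at $u=0$ (it is a power series in $\|u\|^2$), so that point is not an obstacle.
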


\begin{lemma}\label{opeator-norm-lemma}
With probability at least $1-\delta$, the following bound holds

\[
\bigl\|
( \widehat{C}_{\mathcal Y,\mathcal X}-{C_*}\widehat C_\mathcal X)\,
(C_\mathcal X+\lambda I)^{-1/2}
\bigr\|_{\mathcal  S_2(\mathcal H, \mathcal Y)}
\;\leq\;
\frac{4M\kappa}{n\sqrt{\lambda}}
\log\!\left( \frac{4}{\delta} \right)
+ 2M
\sqrt{
\frac{\mathcal{N}(\lambda)}{n}
}\log\!\left( \frac{4}{\delta} \right).
\]

\end{lemma}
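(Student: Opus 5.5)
We write the quantity as an empirical mean of i.i.d.\ Hilbert--Schmidt-valued random variables and apply Lemma~\ref{PINELIS:INEQUALITY} in the Hilbert space $\mathcal S_2(\mathcal H,\mathcal Y)$. Here $\widehat C_{\mathcal Y,\mathcal X}=\frac1n\sum_i y_i\otimes\phi(x_i)$ and $\widehat C_{\mathcal X}=\frac1n\sum_i\phi(x_i)\otimes\phi(x_i)$, i.e.\ the versions of the operators in Lemma~\ref{reprenstorforproof} without $P$. We have
\[
(\widehat C_{\mathcal Y,\mathcal X}-C_*\widehat C_{\mathcal X})(C_{\mathcal X}+\lambda I)^{-1/2}
=\frac1n\sum_{i=1}^n \xi_i,\qquad
\xi_i:=\bigl(y_i-C_*\phi(x_i)\bigr)\otimes (C_{\mathcal X}+\lambda I)^{-1/2}\phi(x_i),
\]
using $(a\otimes b)A=a\otimes A^*b$ and self-adjointness of $(C_{\mathcal X}+\lambda I)^{-1/2}$.

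\textbf{Centering.} Since $C_*\phi(x)=f_\rho(x)=\mathbb E[y\mid x]$ (well-specified setting), conditioning on $x$ gives $\mathbb E[\xi]=\mathbb E_x\bigl[(\mathbb E[y\mid x]-C_*\phi(x))\otimes(C_{\mathcal X}+\lambda I)^{-1/2}\phi(x)\bigr]=0$. Thus we need no centering correction, and the Pinelis bound applies directly to $\|\frac1n\sum\xi_i\|$.

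\textbf{Moment constants.} Let $M$ be a bound on $\|y-C_*\phi(x)\|_{\mathcal Y}$ almost surely. For instance, $M\le M'+\|C_*\|\kappa$ by Assumptions~\ref{kernel-bound}--\ref{bound-Y-asssumption}, or we can take $M$ as a noise bound constant. Since $\|a\otimes b\|_{\mathcal S_2}=\|a\|\|b\|$, we get
\[
\|\xi\|\le M\,\|(C_{\mathcal X}+\lambda I)^{-1/2}\phi(x)\|_{\mathcal H}\le M\kappa/\sqrt\lambda,
\]
so we may take $L=2M\kappa/\sqrt\lambda$. For the variance,
\[
\mathbb E\|\xi\|^2\le M^2\,\mathbb E\langle \phi(x),(C_{\mathcal X}+\lambda I)^{-1}\phi(x)\rangle=M^2\,\mathrm{Tr}\bigl((C_{\mathcal X}+\lambda I)^{-1}C_{\mathcal X}\bigr)=M^2\mathcal N(\lambda),
\]
so we take $\sigma=M\sqrt{\mathcal N(\lambda)}$.

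\textbf{Conclusion.} Plugging these into Lemma~\ref{PINELIS:INEQUALITY} gives, with probability at least $1-\delta$, the bound $2\bigl(\tfrac{2M\kappa}{n\sqrt\lambda}+M\sqrt{\mathcal N(\lambda)/n}\bigr)\log(2/\delta)$. This is dominated by the stated bound with $\log(4/\delta)$; the $4/\delta$ presumably arises from splitting the confidence level with another event later, and in any case $\log(2/\delta)\le\log(4/\delta)$.

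The main obstacle is the centering step and the almost-sure bound. We must justify that $C_*\phi(x)=\mathbb E[y\mid x]$, i.e.\ that $C_*$ is the population minimizer and the model is well specified under Assumption~\ref{assump:source}. We must also identify the constant $M$ bounding the residual. If instead $C_*$ were only the best approximation in $\mathcal S_2$, we would add the term $-\mathbb E\xi$ and control it via the source condition. The trace identity for the variance is routine.
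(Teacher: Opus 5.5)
Your proposal is correct and follows the paper's proof essentially line for line. You use the same random variable $\xi=(y-C_*\phi(x))\otimes(C_{\mathcal X}+\lambda I)^{-1/2}\phi(x)$, the same $M=M'+\|C_*\|_{\mathcal S_2(\mathcal H,\mathcal Y)}\kappa$, the same constants $L=2M\kappa/\sqrt{\lambda}$ and $\sigma=M\sqrt{\mathcal N(\lambda)}$ (the paper cites Lemma~17 of \cite{meunier2024optimal} for the trace identity you verify directly), and Lemma~\ref{PINELIS:INEQUALITY}; the paper simply writes $\log(4/\delta)$ where your $\log(2/\delta)$ is slightly sharper. Your explicit centering step $\mathbb E[\xi]=0$ is left implicit in the paper, and it only needs the population normal equation $C_*C_{\mathcal X}=\mathbb E[y\otimes\phi(x)]$, so no extra source-condition correction would be required even if $C_*$ were merely the population least-squares minimizer in $\mathcal S_2(\mathcal H,\mathcal Y)$.
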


\begin{proof}
    
To bound the above term, we define the random variable
\[
\xi : \mathcal X \times \mathcal  Y \to \mathcal S_2(\mathcal H, \mathcal Y), 
\qquad
\xi(x,y)
=
\bigl(y-C_*\phi(x)\bigr)
\otimes
(C_\mathcal X+\lambda I)^{-1/2}\phi(x).
\]
Let $M=\|y\|_\mathcal Y+\|C_*\|_{\mathcal S_2(\mathcal H, \mathcal Y)}\kappa$, then
\[
\|\xi(x,y)\|_{\mathcal S_2(\mathcal H, \mathcal Y)}
\leq
\frac{\|y\|_\mathcal Y+\|C_*\|_{\mathcal S_2(\mathcal H, \mathcal Y)}\kappa}{\sqrt{\lambda}}
\,\kappa =
\frac{M\,\kappa}{\sqrt{\lambda}}
 .
\]

\noindent
Moreover,
\[
\mathbb{E}\bigl[\|\xi\|_{\mathcal S_2(\mathcal H, \mathcal Y)}^2\bigr]\leq
(\|y\|_\mathcal Y+\|C_*\|_{\mathcal S_2(\mathcal H, \mathcal Y)}\kappa)^2
\int_{\mathcal X}
\bigl\|
(C_\mathcal X+\lambda I)^{-1/2}\phi(x)
\bigr\|_{\mathcal H}^2
\,d\rho_X(x).
\]

\noindent
Using Lemma~17 of \cite{meunier2024optimal}, we obtain
\[
\mathbb{E}\bigl[\|\xi\|_{\mathcal S_2(\mathcal H, \mathcal Y)}^2\bigr]
\leq
M^2
\mathcal{N}(\lambda).
\]

\noindent
Applying Lemma~\ref{PINELIS:INEQUALITY}, we obtain that with probability at least $1-\delta$,
\begin{align*}
\bigl\|
( \widehat{C}_{\mathcal Y,\mathcal X}-{C_*}\widehat C_\mathcal X)\,
(C_\mathcal X+\lambda I)^{-1/2}
\bigr\|_{\mathcal  S_2(\mathcal H, \mathcal Y)}
\;\leq\;
\frac{4M\kappa}{n\sqrt{\lambda}}
\log\!\left( \frac{4}{\delta} \right)
+ 2M
\sqrt{
\frac{\mathcal{N}(\lambda)}{n}
}\log\!\left( \frac{4}{\delta} \right).
\end{align*}

\end{proof}
Using the key lemmas and assumptions introduced above, we now establish the error bound of the proposed algorithm in $\mathcal{S}_2(L^2, \mathcal{Y})$ norm.

\begin{theorem}
Suppose that {assumptions~\ref{kernel-bound}--\ref{assump:source}} are satisfied.
If  the qualification of Tikhonov regularization  covers the index function \(\varphi(t)\sqrt{t}\),
then with probability at least \(1-\delta\), we have,
\[
\|C_* - \widehat{C}_\lambda^P\|_{\mathcal{S}_2(L^2,\mathcal Y)}
\;\le\;8
\|C_o\|_{\mathcal S_2(\mathcal H, \mathcal Y)}
\sqrt{\lambda}\,\varphi(\lambda)+4\left(
\frac{2M\kappa}{n\sqrt{\lambda}}
\log\!\left(\frac{4}{\delta}\right)
+
M
\sqrt{\frac{\mathcal N(\lambda)}{n}}
\log\!\left(\frac{4}{\delta}\right)
\right).
\]
\end{theorem}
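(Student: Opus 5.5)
The plan is to reduce the $\mathcal{S}_2(L^2,\mathcal Y)$ error to an $\mathcal{S}_2(\mathcal H,\mathcal Y)$ bound weighted by $C_\mathcal X^{1/2}$:
\[
\|C_*-\widehat C_\lambda^P\|_{\mathcal S_2(L^2,\mathcal Y)}=\|(C_*-\widehat C_\lambda^P)C_\mathcal X^{1/2}\|_{\mathcal S_2(\mathcal H,\mathcal Y)}.
\]
We then split the difference into an approximation (bias) part and a sample (variance) part. Write $A_\lambda:=\widehat C_\mathcal X^P+\lambda I$, so that $\widehat C_\lambda^P=\widehat C_{\mathcal Y,\mathcal X}^P A_\lambda^{-1}$ by Lemma~\ref{reprenstorforproof}. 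Then
\[
C_*-\widehat C_\lambda^P=\bigl(C_*A_\lambda-\widehat C^P_{\mathcal Y,\mathcal X}\bigr)A_\lambda^{-1}
=\lambda C_*A_\lambda^{-1}+C_*(I-P)\widehat C_\mathcal X P A_\lambda^{-1}\text{-type terms}-\bigl(\widehat C^P_{\mathcal Y,\mathcal X}-C_*\widehat C^P_\mathcal X\bigr)A_\lambda^{-1}.
\]
Since $\widehat C^P_{\mathcal Y,\mathcal X}=\widehat C_{\mathcal Y,\mathcal X}P$ and $\widehat C^P_\mathcal X=P\widehat C_\mathcal X P$, the last bracket equals $(\widehat C_{\mathcal Y,\mathcal X}-C_*\widehat C_\mathcal X)P+C_*(I-P)\widehat C_\mathcal X P$. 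The variance term is therefore $(\widehat C_{\mathcal Y,\mathcal X}-C_*\widehat C_\mathcal X)(C_\mathcal X+\lambda I)^{-1/2}$ composed with $(C_\mathcal X+\lambda I)^{1/2}PA_\lambda^{-1}C_\mathcal X^{1/2}$. The first factor is controlled directly by Lemma~\ref{opeator-norm-lemma}.

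The second factor is handled by the standard operator-perturbation step. On the high-probability event $\|(C_\mathcal X+\lambda I)^{-1/2}(C_\mathcal X-\widehat C_\mathcal X)(C_\mathcal X+\lambda I)^{-1/2}\|\le 1/2$, which follows from Lemma~\ref{PINELIS:INEQUALITY} applied to $\phi(x)\otimes\phi(x)$ once $n\gtrsim \lambda^{-1}\log(1/\delta)$, we get $\|(C_\mathcal X+\lambda I)^{1/2}(\widehat C_\mathcal X+\lambda I)^{-1/2}\|\le\sqrt2$. The same control passes to the projected operator, since $PA_\lambda^{-1}P$ is dominated by $P(\widehat C_\mathcal X+\lambda I)^{-1}P$ in the sense of compressions. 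This yields a constant factor ($\le 2$, hence the overall $4$ multiplying the variance bracket).

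For the bias, use the source condition: $C_*=C_o\varphi(C_\mathcal X)$ with $C_o\in\mathcal S_2(\mathcal H,\mathcal Y)$. Then we need to bound
\[
\lambda\,\|C_o\varphi(C_\mathcal X)A_\lambda^{-1}C_\mathcal X^{1/2}\|\quad\text{and}\quad\|C_o\varphi(C_\mathcal X)(I-P)\cdots\|.
\]
We first transfer $\varphi(C_\mathcal X)$ to empirical quantities via the same $\sqrt2$-type equivalences. The qualification assumption for $\varphi(t)\sqrt t$ is the key: it gives $\sup_{\sigma}\lambda\varphi(\sigma)\sqrt\sigma/(\sigma+\lambda)\le c^{-1}\varphi(\lambda)\sqrt\lambda$, which produces $\sqrt\lambda\varphi(\lambda)$. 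The Nyström residual $(I-P)$ is bounded using $\|(I-P)(C_\mathcal X+\lambda I)^{1/2}\|\lesssim\sqrt\lambda$. This holds with high probability when $m\gtrsim\mathcal N_\infty(\lambda)\log(1/\delta)$, in the style of Rudi et al.\ \cite{less_is_more}, and gives another $\sqrt\lambda\varphi(\lambda)$ contribution. Collecting the constants should give $8\|C_o\|\sqrt\lambda\varphi(\lambda)$.

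The main obstacle is the bias term for a general index function $\varphi$ that is not operator monotone. Moving $\varphi(C_\mathcal X)$ past $(I-P)$ and past the empirical operator $\widehat C_\mathcal X$ cannot be done by the Cordes inequality alone. I would first try to keep $\varphi(C_\mathcal X)$ always adjacent to population operators. Concretely, I would write $\lambda A_\lambda^{-1}$ as $\lambda(C_\mathcal X+\lambda I)^{-1}$ plus a perturbation, apply the qualification only to spectral functions of $C_\mathcal X$, and absorb the perturbation, which involves $C_\mathcal X-\widehat C_\mathcal X$ and $(I-P)$, using the $1/2$-event and $\|\varphi(C_\mathcal X)\|\le\varphi(\kappa^2)$. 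The aim is to avoid any need for Lipschitz or operator monotone properties of $\varphi$. Finally, the probability events (Lemma~\ref{opeator-norm-lemma}, the covariance concentration, and the Nyström projection bound) are combined by a union bound, which accounts for the $\log(4/\delta)$ factors.
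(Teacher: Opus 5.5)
Your decomposition and variance bound follow the paper, but the bias bound, which carries the $\sqrt\lambda\,\varphi(\lambda)$ rate, is not established, and the route you sketch for it loses that rate.

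\textbf{Decomposition and variance.} Since $A_\lambda(I-P)=\lambda(I-P)$, your splitting reads
\[
C_*-\widehat C^P_\lambda=C_*(I-P)+\lambda C_*PA_\lambda^{-1}-C_*(I-P)\widehat C_{\mathcal X}PA_\lambda^{-1}-\bigl(\widehat C_{\mathcal Y,\mathcal X}-C_*\widehat C_{\mathcal X}\bigr)PA_\lambda^{-1}.
\]
These four terms are exactly the paper's $I_{1,1}$, the two pieces of $I_{1,2}$, and $I_2$. Your variance bound (Lemma~\ref{opeator-norm-lemma} times a factor $2$) reproduces the paper's $I_2$.

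One justification there needs fixing. The domination $PA_\lambda^{-1}P\preceq P(\widehat C_{\mathcal X}+\lambda I)^{-1}P$ is true, but it does not give the factor $2$. After sandwiching with $(C_{\mathcal X}+\lambda I)^{1/2}$ you are left with $\|(\widehat C_{\mathcal X}+\lambda I)^{-1/2}P(C_{\mathcal X}+\lambda I)^{1/2}\|$, which is not controlled. Instead, use that $(\widehat C_{\mathcal X}+\lambda I)^{1/2}PA_\lambda^{-1}P(\widehat C_{\mathcal X}+\lambda I)^{1/2}$ is an orthogonal projection; this is the fact the paper imports from Lu et al. Then pay $\|(C_{\mathcal X}+\lambda I)^{1/2}(\widehat C_{\mathcal X}+\lambda I)^{-1/2}\|^2\le 2$.

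\textbf{The gap: the bias terms.} You never bound the three bias terms. Your qualification estimate $\sup_\sigma \lambda\varphi(\sigma)\sqrt\sigma/(\sigma+\lambda)$ only covers the idealized term $\lambda\varphi(C_{\mathcal X})(C_{\mathcal X}+\lambda I)^{-1}C_{\mathcal X}^{1/2}$. The route you propose for the rest fails in two places.
\begin{enumerate}
\item ``Transferring $\varphi(C_{\mathcal X})$ to empirical quantities'' is not available for a general index function without operator monotonicity.
\item Absorbing the perturbation $\lambda A_\lambda^{-1}-\lambda(C_{\mathcal X}+\lambda I)^{-1}$ via $\|\varphi(C_{\mathcal X})\|\le\varphi(\kappa^2)$ throws away the source-condition gain. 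Neither the $1/2$-event nor the Nystr\"om event makes that perturbation small: they give only relative size $1/2$ and $\|(I-P)(C_{\mathcal X}+\lambda I)^{1/2}\|^2\le 3\lambda$. You would end up with $\varphi(\kappa^2)\sqrt\lambda$ instead of $\varphi(\lambda)\sqrt\lambda$.
\end{enumerate}

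\textbf{The missing idea.} The obstacle you identify does not actually arise. $C_*=C_o\varphi(C_{\mathcal X})$ is always the leftmost factor, so $\varphi(C_{\mathcal X})$ never has to be moved past $I-P$ or $\widehat C_{\mathcal X}$. Peel off $\varphi(C_{\mathcal X})(C_{\mathcal X}+\lambda I)^{-1/2}$. Its norm is $\sup_\sigma\varphi(\sigma)/\sqrt{\sigma+\lambda}\le c^{-1}\varphi(\lambda)/\sqrt\lambda$, using covering of $\varphi(t)\sqrt t$ for $\sigma\ge\lambda$ and monotonicity of $\varphi$ for $\sigma<\lambda$. What remains is $\varphi$-free and of order $\lambda$:
\[
\|(C_{\mathcal X}+\lambda I)^{1/2}(I-P)(C_{\mathcal X}+\lambda I)^{1/2}\|\le 3\lambda,\qquad \lambda\|(C_{\mathcal X}+\lambda I)^{1/2}PA_\lambda^{-1}C_{\mathcal X}^{1/2}\|\le 2\lambda .
\]
For the third term, write $(I-P)\widehat C_{\mathcal X}P=(I-P)(\widehat C_{\mathcal X}+\lambda I)P$ to get $\|(C_{\mathcal X}+\lambda I)^{1/2}(I-P)\widehat C_{\mathcal X}PA_\lambda^{-1}C_{\mathcal X}^{1/2}\|\lesssim\lambda$.

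This is how the paper gets $3+2+3=8$. It peels off $\varphi(C_{\mathcal X})(\widehat C_{\mathcal X}+\lambda I)^{-1/2}$, which costs only a factor $\sqrt2$ over the population version, and uses the Nystr\"om projection bound of Rudi et al. No Lipschitz or operator-monotone property of $\varphi$ is needed.
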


\begin{proof}
We decompose the error as:
\begin{align*}
\|C_* - \widehat{C}_\lambda^P\|_{\mathcal{S}_2(L^2,\mathcal Y)}
&\le 
\underbrace{\|C_* - C_* \widehat{C}_{\mathcal X} P g_\lambda(P \widehat{C}_{\mathcal X} P)\|_{\mathcal{S}_2(L^2,\mathcal Y)}}_{I_1}+
\underbrace{\|C_* \widehat{C}_{\mathcal X} P g_\lambda(P \widehat{C}_{\mathcal X} P) - \widehat{C}_\lambda^P\|_{\mathcal{S}_2(L^2,\mathcal Y)}}_{I_2}.
\end{align*}

\noindent
\underline{\emph{Term-$I_{1}$:}}
\begin{align*}
I_1
&\le
\|C_*(I-P)\|_{\mathcal{S}_2(L^2,\mathcal Y)}
+
\|C_*(P-\widehat{C}_{\mathcal X} P g_\lambda(P\widehat{C}_{\mathcal X} P))\|_{\mathcal{S}_2(L^2,\mathcal Y)} \\
&\le \underbrace{
\|C_*(I-P)(C_\mathcal X+\lambda I)^{1/2}\|_{\mathcal{S}_2(\mathcal H, \mathcal Y)}}_{I_{1,1}}
+\underbrace{
\|C_*(P-\widehat{C}_{\mathcal X} P (P\widehat{C}_{\mathcal X} P+\lambda I)^{-1})(C_\mathcal X+\lambda I)^{1/2}\|_{\mathcal{S}_2(\mathcal H, \mathcal Y)}}_{I_{1,2}} .
\end{align*}

\noindent
\underline{\emph{Bound for term-$I_{1,1}$:}}
\begin{align*}
& \|C_*(I-P)(\widehat C_\mathcal X+\lambda I)^{1/2}\|_{\mathcal{S}_2(\mathcal H, \mathcal Y)}\\
\le &
\|C_*(\widehat C_\mathcal X+\lambda I)^{-1/2}\|_{\mathcal S_2(\mathcal H, \mathcal Y)}
\|
(\widehat C_\mathcal X+\lambda I)^{1/2}(I-P)(\widehat C_\mathcal X+\lambda I)^{1/2}
\|_{op}\\
\leq & \|C_o\|_{\mathcal S_2(\mathcal H, \mathcal Y)}
\|
\varphi(C_\mathcal X)(\widehat C_\mathcal X+\lambda I)^{-1/2}\|_{op}
\|
(\widehat C_\mathcal X+\lambda I)^{1/2}(I-P)(\widehat C_\mathcal X+\lambda I)^{1/2}
\|_{op}\\
\stackrel{(*)}{\leq} & 3\lambda \|C_o\|_{\mathcal S_2(\mathcal H, \mathcal Y)}
\|
\varphi(C_\mathcal X)(\widehat C_\mathcal X+\lambda I)^{-1/2}\|_{op}\\
\leq & 3\|C_o\|_{\mathcal S_2(\mathcal H, \mathcal Y)}\sqrt{\lambda}\,\varphi(\lambda),
\end{align*}
where $(*)$ follows from \cite[Lemma 6]{less_is_more} and for the last step we use \cite[Lemma 2]{Gupta_2025}.\\

\noindent
\underline{\emph{Bound for term-$I_{1,2}$:}}
\begin{align*}
    & \|C_*(P-\widehat{C}_{\mathcal X} P (P\widehat{C}_{\mathcal X} P+\lambda I)^{-1})(C_\mathcal X+\lambda I)^{1/2}\|_{\mathcal{S}_2(\mathcal H, \mathcal Y)}\\
    \leq & \|C_*P(I-\widehat C_\mathcal XP(P\widehat C_\mathcal XP+\lambda I)^{-1})(C_\mathcal X+\lambda I)^{1/2}\|_{\mathcal S_2(\mathcal{H}, \mathcal Y)} \\
&\quad+
\|C_*(I-P)\widehat C_\mathcal XP(P\widehat C_\mathcal XP+\lambda I)^{-1}(C_\mathcal X+\lambda I)^{1/2}\|_{\mathcal S_2(\mathcal{H}, \mathcal Y)}\\
\leq & \|C_*P(I-\widehat C_\mathcal XP(P\widehat C_\mathcal XP+\lambda I)^{-1})(C_\mathcal X+\lambda I)^{1/2}\|_{\mathcal S_2(\mathcal{H}, \mathcal Y)} + 3\|C_o\|_{\mathcal S_2(\mathcal H, \mathcal Y)}\sqrt{\lambda}\varphi(\lambda)\\
\stackrel{(*)}{=} & \lambda
\|C_*P(P\widehat C_\mathcal XP+\lambda I)^{-1}(C_\mathcal X+\lambda I)^{1/2}\|_{\mathcal S_2(\mathcal{H}, \mathcal Y)} + 3\|C_o\|_{\mathcal S_2(\mathcal H, \mathcal Y)}\sqrt{\lambda}\varphi(\lambda)\\
\leq & 2\|C_o\|_{\mathcal S_2(\mathcal H, \mathcal Y)}
\varphi(\lambda) \sqrt{\lambda} + 3\|C_o\|_{\mathcal S_2(\mathcal H, \mathcal Y)}\sqrt{\lambda}\varphi(\lambda),
\end{align*}
where $(*)$ follows from the fact that $I-\widehat C_\mathcal XP(P\widehat C_\mathcal XP+\lambda I)^{-1}=
\lambda(P\widehat C_\mathcal XP+\lambda I)^{-1}$ and for the last step we use bound from \cite{less_is_more} and \cite[Lemma~5]{Gupta_2025}.\\

\noindent
Combining the above estimates yields
\begin{align*}
I_1
\le
8
\|C_o\|_{\mathcal S_2(\mathcal H, \mathcal Y)}
\sqrt{\lambda}\,\varphi(\lambda).
\end{align*}

\noindent
\underline{\emph{Term-$I_{2}$:}}

\begin{align*}
&\|(C_* \widehat C_\mathcal X-\widehat C_{\mathcal Y, \mathcal X})
P g_\lambda(\widehat C_\mathcal X^P)
(C_\mathcal X+\lambda I)^{1/2}\|_{\mathcal S_2(\mathcal H, \mathcal Y)}\\
& \leq \|(C_*\widehat C_\mathcal X-\widehat C_{\mathcal Y, \mathcal X})(\widehat C_\mathcal X+\lambda I)^{-1/2}\|_{\mathcal S_2(\mathcal{H},\mathcal Y)}
\|
(\widehat C_\mathcal X+\lambda I)^{1/2}
P g_\lambda(\widehat C_\mathcal X^P)
(C_\mathcal X+\lambda I)^{1/2}
\|_{op}.
\end{align*}

\noindent
The bound for the first term is given in Lemma \ref{opeator-norm-lemma}. The bound for the second term is 1, as established by Theorem 1 in \cite{shuai_lu}.

Thus
\begin{align*}
I_2
\le
2\left(
\frac{4M\kappa}{n\sqrt{\lambda}}
\log\!\left(\frac{4}{\delta}\right)
+
2M
\sqrt{\frac{\mathcal N(\lambda)}{n}}
\log\!\left(\frac{4}{\delta}\right)
\right).
\end{align*}

\noindent
Finally, combining the bounds for \(I_1\) and \(I_2\) and applying a union bound yields the desired result with probability at least \(1-\delta\).
\end{proof}

The previous result guarantees a convergence rate for the proposed approximation scheme. Under an additional Assumption~\ref{eigendecay}, the next theorem establishes the corresponding optimal convergence rate. The proof is analogous to that of the previous theorem and is omitted for brevity.


\begin{theorem}\label{Rates-theorem-general}
Suppose that {assumptions~\ref{kernel-bound}--\ref{eigendecay}} hold and  $ \delta \in (0,  1)$. 
Assume $\lambda \in (0,1]$ satisfies $\lambda = \Psi^{-1}(n^{-\frac{1}{2}})$, 
where $\Psi(t) = t^{\frac{1}{2} + \frac{1}{2\mathbf{b}}} \varphi(t)$. If the index function $\varphi(t)\sqrt{t}$ is covered by the qualification of the Tikhonov regularization, then with probability at least $1-\delta$, we have
\[
\|C_* - \widehat{C}_\lambda^P\|_{\mathcal{S}_2(L^2,\mathcal Y)}
\leq
c\,
\left(\Psi^{-1}\!\left(n^{-\frac{1}{2}}\right)\right)^{\frac{1}{2}}
\varphi\!\left(\Psi^{-1}\!\left(n^{-\frac{1}{2}}\right)\right)
\log\!\left(\frac{2}{\delta}\right),
\]
for some positive constant $c$.

\end{theorem}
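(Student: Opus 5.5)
The plan is to start from the high-probability error bound of the preceding theorem,
\[
\|C_* - \widehat{C}_\lambda^P\|_{\mathcal{S}_2(L^2,\mathcal Y)}
\le 8\|C_o\|_{\mathcal S_2(\mathcal H,\mathcal Y)}\sqrt{\lambda}\,\varphi(\lambda)
+4\left(\frac{2M\kappa}{n\sqrt{\lambda}}+M\sqrt{\frac{\mathcal N(\lambda)}{n}}\right)\log\!\left(\frac{4}{\delta}\right),
\]
and specialise it using Assumption~\ref{eigendecay} and the choice of $\lambda$. The preceding theorem was proved under assumptions~\ref{kernel-bound}--\ref{assump:source} and the qualification hypothesis, so no new operator estimates are needed.

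\textbf{Step 1: effective dimension.} Insert $\mathcal N(\lambda)\lesssim \lambda^{-1/\mathbf b}$, which follows from Assumption~\ref{eigendecay}. The variance term becomes $M\lambda^{-1/(2\mathbf b)}n^{-1/2}$, up to constants.

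\textbf{Step 2: balance bias and variance.} The defining relation $\lambda=\Psi^{-1}(n^{-1/2})$ means
\[
\lambda^{\frac12+\frac1{2\mathbf b}}\varphi(\lambda)=n^{-1/2}.
\]
This is equivalent to
\[
\lambda^{-1/(2\mathbf b)}n^{-1/2}=\sqrt{\lambda}\,\varphi(\lambda),
\]
so the effective-dimension term equals the bias term exactly. Since $\Psi$ is continuous, strictly increasing and $\Psi(0)=0$, $\Psi^{-1}$ is well defined. For $n$ large enough, $\lambda\le 1$.

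\textbf{Step 3: the $1/(n\sqrt\lambda)$ term.} This is where I expect the main (mild) obstacle. I need to show that $\frac{1}{n\sqrt\lambda}\lesssim \sqrt\lambda\,\varphi(\lambda)$. Write
\[
\frac{1}{n\sqrt{\lambda}}=\frac{1}{\sqrt{n}}\cdot\frac{1}{\sqrt{n\lambda}}
=\sqrt\lambda\,\varphi(\lambda)\,\lambda^{1/(2\mathbf b)}\cdot\frac{1}{\sqrt{n\lambda}},
\]
which reduces the claim to $n\lambda^{1-1/\mathbf b}\gtrsim 1$. From Step 2, $n=\lambda^{-1-1/\mathbf b}\varphi(\lambda)^{-2}$. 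Hence
\[
n\lambda^{1-1/\mathbf b}=\lambda^{-2/\mathbf b}\varphi(\lambda)^{-2}.
\]
This quantity is at least $\varphi(1)^{-2}$ for $\lambda\le1$, because both factors are nonincreasing as $\lambda$ decreases toward $0$ is reversed — more precisely, $\lambda^{-2/\mathbf b}\ge 1$ and $\varphi(\lambda)\le\varphi(1)$ for $\lambda\le 1$. So the claim holds with a constant depending on $\varphi(1)$. If one prefers to avoid $\varphi(1)$, the alternative is a standard side condition that $n$ be large enough, which is also implicit in the cited lemmas of \cite{less_is_more} and \cite{shuai_lu}. These lemmas require $m\gtrsim\mathcal N(\lambda)\log(1/\lambda\delta)$ and $n\gtrsim\mathcal N_\infty(\lambda)\log(1/\delta)$; under $\lambda\le1$ and the choice above, these hold for $n$ sufficiently large, which I will state as part of the constants.

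\textbf{Step 4: collect constants.} Combine the three terms, absorbing $\|C_o\|$, $M$ and $\kappa$ into $c$, and use $\log(4/\delta)\le 2\log(2/\delta)$. The result is
\[
\|C_*-\widehat C_\lambda^P\|\le c\,\bigl(\Psi^{-1}(n^{-1/2})\bigr)^{1/2}\varphi\bigl(\Psi^{-1}(n^{-1/2})\bigr)\log(2/\delta).
\]
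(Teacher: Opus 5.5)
Your proposal is correct and follows the route the paper intends; the paper omits this proof as ``analogous''. The route is to insert $\mathcal N(\lambda)\lesssim\lambda^{-1/\mathbf b}$ and the balancing choice $\lambda=\Psi^{-1}(n^{-1/2})$ into the preceding theorem's bound, so the variance term equals $\sqrt{\lambda}\,\varphi(\lambda)$, and your Step 3 fills in a detail the paper leaves implicit: $1/(n\sqrt{\lambda})\le\varphi(1)\sqrt{\lambda}\,\varphi(\lambda)$ for $\lambda\le 1$, via $n\lambda^{1-1/\mathbf b}=\lambda^{-2/\mathbf b}\varphi(\lambda)^{-2}\ge\varphi(1)^{-2}$. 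Two small fixes: rewrite the garbled clause before ``more precisely'', and note that the bias term, which has no log factor, is absorbed into $c\log(2/\delta)$ because $\log(2/\delta)\ge\log 2$.
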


\begin{corollary}\label{Holder-corollary}
Suppose that Assumptions~\ref{kernel-bound}--\ref{eigendecay} hold and let
$\delta \in (0,1)$. 
Assume that the Hölder's source condition holds, i.e.,
$\varphi(t)=t^r$ for some $r>0$,
and choose the regularization parameter
$\lambda = n^{-\frac{\mathbf b}{2\mathbf b r+\mathbf b+1}}$.
Assume that $\varphi(t)\sqrt{t}$ is covered by the qualification of Tikhonov regularization. Then, for any $0 < r < \tfrac{1}{2}$, it holds with probability at least $1 - \delta$ that
\[
\|C_* - \widehat{C}_\lambda^P\|_{\mathcal{S}_2(L^2,\mathcal Y)}
\le
c\,
n^{-\frac{2 \textbf{b} r+\textbf{b}}{4 \textbf{b} r+2 \textbf{b}+2}}
\log\!\left(\frac{2}{\delta}\right),
\]
for some positive constant $c$.

\end{corollary}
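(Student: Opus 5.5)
The corollary is a specialization of Theorem~\ref{Rates-theorem-general}, so the plan is to plug in $\varphi(t)=t^r$ and evaluate $\Psi^{-1}$ explicitly. With $\varphi(t)=t^r$ we get
\[
\Psi(t)=t^{\frac12+\frac{1}{2\mathbf b}+r}=t^{\frac{2\mathbf b r+\mathbf b+1}{2\mathbf b}},
\]
which is a strictly increasing power. Solving $\Psi(\lambda)=n^{-1/2}$ gives
\[
\lambda=\Psi^{-1}(n^{-1/2})=n^{-\frac{\mathbf b}{2\mathbf b r+\mathbf b+1}},
\]
which is exactly the choice of $\lambda$ in the statement. Since $\mathbf b>1$ and $r>0$, this $\lambda$ lies in $(0,1]$ for every $n\ge 1$, so the admissibility requirement $\lambda\in(0,1]$ of Theorem~\ref{Rates-theorem-general} holds.

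Next I would check the qualification hypothesis. The function $\varphi(t)\sqrt t=t^{r+1/2}$ must be covered by Tikhonov qualification in the sense of Definition~\ref{qualfication_covering}. This means $t/\varphi(t)\sqrt{t}=t^{1/2-r}$ should be essentially nondecreasing, and that holds precisely when $r\le 1/2$. This is where the restriction $0<r<\tfrac12$ is used. In that range $\sigma\mapsto\sigma^{1/2-r}$ is increasing, so the infimum over $[t,s]$ is attained at $\sigma=t$ and the covering inequality holds with $c=1$. The statement also assumes this directly, so the check is only a consistency check.

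Finally I would apply Theorem~\ref{Rates-theorem-general}. It gives, with probability at least $1-\delta$, the bound
\[
c\,\lambda^{1/2}\varphi(\lambda)\log(2/\delta)=c\,\lambda^{r+\frac12}\log(2/\delta).
\]
Substituting the value of $\lambda$, the exponent of $n$ is
\[
-\frac{\mathbf b(r+\frac12)}{2\mathbf b r+\mathbf b+1}=-\frac{2\mathbf b r+\mathbf b}{4\mathbf b r+2\mathbf b+2},
\]
which is the claimed rate.

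There is no genuine obstacle here. The only points needing care are the algebraic inversion of $\Psi$ and confirming that the qualification condition restricts $r$ to $(0,1/2)$.
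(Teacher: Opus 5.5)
Your proposal is correct and matches the paper, which gives no separate proof because the corollary is just Theorem~\ref{Rates-theorem-general} with $\varphi(t)=t^r$: there $\Psi(t)=t^{(2\mathbf b r+\mathbf b+1)/(2\mathbf b)}$ inverts to the stated $\lambda$, and $\lambda^{r+1/2}$ yields the rate $n^{-\frac{2\mathbf b r+\mathbf b}{4\mathbf b r+2\mathbf b+2}}$. Your check that Tikhonov qualification covers $t^{r+1/2}$ exactly when $r\le \tfrac12$ correctly explains the restriction on $r$, though, as you note, this is already assumed in the statement.
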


\begin{remark}
Under the polynomial eigenvalue decay assumption, the convergence rates in
Theorem~\ref{Rates-theorem-general} and Corollary~\ref{Holder-corollary}
coincide with the minimax optimal rates reported in \cite{rastogi2017optimal}
and \cite{less_is_more} for the general and Hölder source conditions,
respectively. 
Moreover, even without Assumption~\ref{eigendecay}, i.e., in the absence of
polynomial eigenvalue decay, estimates analogous to those in Theorem~10 of
\cite{regularization} can still be established. 
Hence, under the considered assumptions, the proposed estimator achieves
optimal convergence rates.
\end{remark}

\section{NUMERICAL RESULTS}\label{section:4}

This section presents numerical experiments evaluating the proposed approach 
across three core denoising tasks --- signal denoising, real-time signal 
denoising, and image denoising --- as well as two additional problems: CT 
image reconstruction from sinogram measurements and a low-dimensional 
regression problem for energy-efficiency prediction. Across all experiments, 
we use a Gaussian kernel of the form
\begin{equation}
    k(x, t) = e^{-\gamma \|x - t\|^2},
\end{equation}
where $\gamma > 0$ is a bandwidth parameter controlling the rate of decay, 
and investigate the effectiveness of the proposed Nystr\"{o}m-based 
approximation, demonstrating that substantial reductions in computational 
cost can be achieved while maintaining performance comparable to the full 
kernel method.

In the signal denoising experiments, we assess the ability of the method to recover clean signals from noisy observations, examining how well it preserves important signal features while suppressing noise. The real-time denoising experiments target streaming settings, where denoised outputs must be produced as new measurements arrive; here we also benchmark the proposed framework against several existing denoising techniques. The image denoising experiments consider recovery from both additive noise and motion blur — two degradation types commonly encountered in practice. The CT reconstruction experiment addresses the classical inverse problem of recovering an image from tomographic projection data, while the regression experiment estimates heating and cooling loads from building design characteristics, illustrating applicability to high-dimensional and functional outputs alike.

All experiments were conducted on a MacBook Air with an Apple M4 processor (10-core CPU, 8-core GPU, 16~GB unified memory) running macOS Tahoe. The implementation is available.\footnote{\url{https://anonymous.4open.science/r/Nystrom_experiments-013F/README.md}}

\subsection{Signal Denoising}
\subsubsection{Offline Signal Denoising}

We begin with a synthetic signal denoising experiment. For each realization, the clean signal is generated as a superposition of two sinusoidal components. Let
\[
t_n = \frac{n}{f_s}, \qquad n=0,1,\ldots,N-1,
\]
where \(f_s=1000\) Hz denotes the sampling frequency and \(N\) is the signal length. Let $\mathcal{U}$ denotes the uniform distribution, then the clean signal is defined by
\[
x_n =
a \sin\left(2\pi p t_n + \phi_1\right)
+
b \sin\left(2\pi q t_n + \phi_2\right),
\]
where the frequencies, amplitudes, and phases are sampled independently according to
\[
p \sim \mathcal{U}(30,90), \qquad
q \sim \mathcal{U}(100,220),
\]
\[
a \sim \mathcal{U}(0.8,1.2), \qquad
b \sim \mathcal{U}(0.3,0.7),
\]
\[
\phi_1,\phi_2 \sim \mathcal{U}(0,2\pi).
\]
This construction produces signals with randomized spectral content, amplitudes, and phase offsets. Each signal has total length $16{,}384$ samples and is divided into $128$ non-overlapping segments of length $128$ samples each.

To simulate noisy observations, Gaussian noise $\varepsilon \sim \mathcal{N}(0,\sigma^2)$ is added to each clean signal, with standard deviation $\sigma_{\text{train}} = 0.9$ for training and $\sigma_{\text{test}} = 1.5$ for testing. In the experiment, 20 distinct clean signals are used for training, yielding $20 \times 128 = 2{,}560$ segments in total, which are partitioned into an $80$--$20$ training-validation split. Testing is performed on 100 newly generated random signals as mentioned above, so the model must generalize across both noise realizations and different underlying signal structures.

We briefly describe our signal denoising method. Let $x_i \in \mathbb{R}^{128}$ denote the $i$-th noisy segment and $y_i \in \mathbb{R}^{128}$ the corresponding clean segment. We compute the Discrete Cosine Transform (DCT) of each noisy segment to obtain the feature representation
\begin{equation}
    \tilde{x}_i = \mathcal{D}(x_i) \in \mathbb{R}^{128},
\end{equation}
where $\mathcal{D} : \mathbb{R}^{128} \to \mathbb{R}^{128}$ denotes the DCT operator. The goal of our operator learning algorithm is then to approximate the map
\begin{equation}
    \mathcal{F} : \tilde{x}_i \mapsto y_i,
\end{equation}
recovering the clean signal $y_i$ from the noisy observations $\tilde{x}_i$. Hyperparameters $\gamma$ and $\lambda$ are jointly selected over the grids $\gamma \in [10^{-6}, 10^{1}]$ and $\lambda \in [10^{-6}, 10^{-1}]$, each with $20$ log-spaced values, using the validation split described above.

We compare the full KRR solution against its Nystr\"{o}m approximation for 
$m \in \{50, 75, 100, 125, 150\}$. The Nystr\"{o}m method selects $m$ subsample points uniformly at random from the training set and constructs a low-rank approximation of the kernel matrix, replacing the $\mathcal{O}(n^2)$ kernel system with an $m \times m$ eigendecomposition.

Performance is evaluated using the mean squared error (MSE) and the signal-to-noise ratio (SNR) gain (in dB), together with prediction time per segment to assess computational efficiency. Each experiment is repeated $100$ times with different random seeds, and results are reported as means and standard deviations across repetitions. Table~\ref{tab:offline} summarizes the quantitative results.

\begin{table}[h]
\centering
\caption{Comparison of Full KRR and Nyström KRR for varying $m$ in terms of MSE, SNR Gain, and computation time}
\label{tab:offline}
\begin{tabular}{lcccc}
\toprule
\textbf{Method} & \textbf{$m$} & \textbf{MSE} & \textbf{SNR Gain (dB)} & \textbf{Time (ms)} \\
\midrule
Full KRR      & -- & 0.4699 & 6.823 & 0.0111  \\
\midrule
Nyström KRR   & 50  & 0.6363 & 5.506 & 0.000727 \\
Nyström KRR   & 75  & 0.6211 & 5.613 & 0.001080 \\
Nyström KRR   & 100 & 0.5798 & 5.912 & 0.001384 \\
Nyström KRR   & 125 & 0.4888 & 6.644 & 0.001787 \\
Nyström KRR   & 150 & 0.4663 & 6.847 & 0.002232 \\
\bottomrule
\end{tabular}
\end{table}




\subsubsection{Real-Time Signal Denoising}
We conduct experiments on synthetic audio signals corrupted by additive Gaussian 
noise, with the objective of recovering clean audio from noisy observations in a 
streaming setting. The audio signals are generated at a sampling frequency of 
$f_s = 16{,}000$ Hz with a duration of $5$ seconds, yielding $80{,}000$ samples 
per signal. Each signal is constructed as a sequence of six sinusoidal tones, 
where each tone is a harmonic triplet of the form 
$\sin(2\pi f t) + 0.5\sin(4\pi f t) + 0.3\sin(6\pi f t)$. The resulting segments 
are concatenated and the composite signal is normalized to the range 
$[-1, 1]$.

Frequency sets are constructed by selecting a base frequency $f_0$ and scaling 
it by fixed harmonic factors $[1.0,\, 1.5,\, 2.0,\, 2.5,\, 3.0,\, 2.0]$, 
yielding six-note sequences with a consistent harmonic structure. Training uses 
five such note sets with base frequencies ranging from $220$ Hz to $330$ Hz, 
and validation uses two additional sets. For testing, $100$ unseen note sets are 
generated by sampling base frequencies uniformly from $[250,\, 360]$ Hz, 
producing frequency combinations not seen during training. Gaussian noise with standard deviation 
$\sigma_{\text{train}} = 0.9$ is added to the training and validation signals, 
while $\sigma_{\text{test}} = 1.5$ is used for the test signals, introducing a 
deliberate train/test noise mismatch that makes the task particularly challenging.

Let $x_i \in \mathbb{R}^{256}$ and $y_i \in \mathbb{R}^{256}$ denote the $i$-th 
noisy and clean frame, respectively, extracted with a Hann window of frame size 
$256$ and hop size $ 64$. We compute the DCT of each frame as
\begin{equation}
    \tilde{x}_i = \mathcal{D}(x_i) \in \mathbb{R}^{256},
\end{equation}
where $\mathcal{D} : \mathbb{R}^{256} \to \mathbb{R}^{256}$ denotes the DCT 
operator. The input feature is formed by concatenating the DCT coefficients of 
the previous and current noisy frames,
\begin{equation}
    z_i = [\tilde{x}_{i-1},\,\tilde{x}_i] \in \mathbb{R}^{512},
\end{equation}
and the regression target is the residual between the clean and noisy spectral 
coefficients,
\begin{equation}
    r_i = \mathcal{D}(y_i) - \tilde{x}_i \in \mathbb{R}^{256}.
\end{equation}
The operator learning algorithm approximates the map 
$\mathcal{F} : z_i \mapsto r_i$, 
from which the clean frame is recovered as 
$\hat{y}_i = \mathcal{D}^{-1}(\tilde{x}_i + \mathcal{F}(z_i))$.

Each signal of $80{,}000$ samples yields $\lfloor 80{,}000 / 64 \rfloor = 1{,}250$ 
frames, so the five training signals produce a total of $N = 6{,}250$ training 
frames, giving
\[
    X_{\text{train}} \in \mathbb{R}^{6250 \times 512}, \qquad 
    Y_{\text{train}} \in \mathbb{R}^{6250 \times 256}.
\]
Model selection is carried out over Gaussian kernel bandwidths 
$\gamma \in \{10^{-3},\,\dots,\,10^{2}\}$ and regularization parameters 
$\lambda \in \{10^{-9},\,\dots,\,10^{-2}\}$, each sampled on a logarithmic grid 
of $10$ values. For the Nystr\"{o}m approximation, the number of subsample points 
is varied over $m \in \{100,\,200,\,300,\,400,\,500\}$. The entire pipeline is repeated over 
$100$ independent trials.

In the real-time implementation, the signal is processed causally: a rolling 
buffer stores only the most recent frame of noisy samples, and a prediction is 
made whenever a hop boundary is reached. Since the input feature combines only 
the current and previous noisy DCT coefficients, the inference rule uses no 
future information and can be implemented in a streaming fashion. With a frame 
size of $256$ samples at $f_s = 16{,}000$ Hz, the analysis context spans
\[
    \frac{256}{16{,}000} = 16~\text{ms},
\]
and the enhanced waveform is updated every
\[
    \frac{64}{16{,}000} = 4~\text{ms}.
\]

Performance is evaluated using MSE and SNR gain between the reconstructed and 
ground-truth clean signals, averaged over the $100$ test signals. The average 
MSE of the noisy test signals before denoising is approximately $2.2453$, while 
the training-set MSE is approximately $0.8108$, reflecting the noise mismatch 
between training and test conditions. Results for different Nystr\"{o}m ranks $m$ 
are reported in Table~\ref{tab:real.time}.
\begin{table}[h]
\centering
\caption{Comparison of Full KRR and Nyström KRR for varying $m$ in terms of MSE, SNR Gain, and computation time}
\label{tab:real.time}
\begin{tabular}{lcccc}
\toprule
\textbf{Method} & \textbf{$m$} & \textbf{MSE} & \textbf{SNR Gain (dB)} & \textbf{Time (s)} \\
\midrule
Full KRR      & -- & 0.2106 & 10.28 & 7.1  \\
\midrule
Nyström KRR   & 100  & 1.0606 & 3.26 & 1.0  \\
Nyström KRR   & 200 & 0.3317 & 8.31 & 1.3 \\
Nyström KRR   & 300 & 0.2313 & 9.89 & 1.9 \\
Nyström KRR   & 400 & 0.2197 & 10.10 & 2.8 \\
Nyström KRR   & 500 & 0.2151 & 10.20 & 3.4 \\
\bottomrule
\end{tabular}
\end{table}

\subsubsection{Comparison with other signal denoising methods}
We compare the denoising performance of the proposed method against well-known 
wavelet-based signal denoising approaches~\cite{real_time_ieee,wavelet_denosiing_2000}. 
We evaluate over a broad collection of wavelet 
families, including \texttt{bior1.1}--\texttt{bior2.6}, 
\texttt{coif1}--\texttt{coif5}, \texttt{db2}--\texttt{db11}, 
\texttt{rbio1.3}--\texttt{rbio2.8}, and \texttt{sym2}--\texttt{sym7}. 
These wavelets span a wide range of filter lengths, symmetry properties, and 
reconstruction characteristics, providing a comprehensive benchmark~\cite{optimal_wavelet}.
Each synthetic signal consists of six sequential tones, each a harmonic 
triplet of the form $\sin(2\pi f t) + 0.5\sin(4\pi f t) + 0.3\sin(6\pi f t)$,  concatenated to form a signal of length 
$N$ sampled at frequency $f_s$. The composite signal is 
corrupted by additive Gaussian noise.

Performance is evaluated on 100 unseen test signals with previously unseen 
frequency combinations. Results are reported in terms of mean squared error (MSE) 
and signal-to-noise ratio (SNR) gain, averaged over 100 independent repetitions 
with additive Gaussian noise of standard deviation $\sigma_{\text{test}} = 1.5$ 
for test samples and $\sigma_{\text{train}} = 0.9$ for training samples.
Table~\ref{tab:wavelet_nystrom_combined} summarizes the results, showing that 
across all wavelet families, the proposed Nystr\"{o}m-based method consistently 
achieves lower MSE and higher SNR gain compared to the classical baseline. 
These results demonstrate that while wavelet thresholding remains a strong and 
competitive baseline, the proposed approach yields consistent improvements across 
both batch and real-time processing regimes.

\begin{table}[!htbp]
\centering
\caption{Denoising performance comparison under offline and online settings.}
\label{tab:wavelet_nystrom_combined}
\begin{tabular}{lcccc}
\toprule
& \multicolumn{2}{c}{\textbf{Offline}} & \multicolumn{2}{c}{\textbf{Real-time}} \\
\cmidrule(lr){2-3} \cmidrule(lr){4-5}
\textbf{Method} & \textbf{MSE} & \textbf{SNR-GAIN} & \textbf{MSE} & \textbf{SNR-GAIN} \\
\midrule
Wavelet-bior1.1 & $0.2327$ & $9.859$ & $0.3976$ & $7.528$ \\
Wavelet-bior1.3 & $0.2460$ & $9.617$ & $0.4313$ & $7.175$ \\
Wavelet-bior1.5 & $0.2549$ & $9.463$ & $0.4627$ & $6.869$ \\
Wavelet-bior2.2 & $0.2508$ & $9.538$ & $0.5230$ & $6.337$ \\
Wavelet-bior2.4 & $0.2301$ & $9.913$ & $0.5264$ & $6.309$ \\
Wavelet-bior2.6 & $0.2282$ & $9.950$ & $0.5481$ & $6.133$ \\
Wavelet-coif1   & $0.2299$ & $9.916$ & $0.4115$ & $7.379$ \\
Wavelet-coif2   & $0.2290$ & $9.935$ & $0.4367$ & $7.121$ \\
Wavelet-coif3   & $0.2289$ & $9.937$ & $0.5567$ & $6.066$ \\
Wavelet-coif4   & $0.2290$ & $9.935$ & $0.5872$ & $5.834$ \\
Wavelet-coif5   & $0.2291$ & $9.935$ & $0.6199$ & $5.598$ \\
Wavelet-db2     & $0.2300$ & $9.913$ & $0.4032$ & $7.468$ \\
Wavelet-db3     & $0.2293$ & $9.928$ & $0.4103$ & $7.392$ \\
Wavelet-db4     & $0.2290$ & $9.935$ & $0.4183$ & $7.308$ \\
Wavelet-db5     & $0.2291$ & $9.933$ & $0.4265$ & $7.224$ \\
Wavelet-db6     & $0.2290$ & $9.936$ & $0.4353$ & $7.135$ \\
Wavelet-db7     & $0.2289$ & $9.937$ & $0.4441$ & $7.048$ \\
Wavelet-db8     & $0.2291$ & $9.935$ & $0.4534$ & $6.958$ \\
Wavelet-db9     & $0.2290$ & $9.936$ & $0.5544$ & $6.084$ \\
Wavelet-db10    & $0.2289$ & $9.937$ & $0.5640$ & $6.009$ \\
Wavelet-db11    & $0.2291$ & $9.934$ & $0.5736$ & $5.936$ \\
Wavelet-rbio1.3 & $0.2420$ & $9.693$ & $0.4206$ & $7.285$ \\
Wavelet-rbio1.5 & $0.2503$ & $9.547$ & $0.4455$ & $7.034$ \\
Wavelet-rbio2.2 & $0.2585$ & $9.401$ & $0.4041$ & $7.458$ \\
Wavelet-rbio2.4 & $0.2318$ & $9.880$ & $0.3931$ & $7.579$ \\
Wavelet-rbio2.6 & $0.2293$ & $9.929$ & $0.4058$ & $7.441$ \\
Wavelet-rbio2.8 & $0.2296$ & $9.924$ & $0.5158$ & $6.397$ \\
Wavelet-sym2    & $0.2300$ & $9.913$ & $0.4032$ & $7.468$ \\
Wavelet-sym3    & $0.2293$ & $9.928$ & $0.4103$ & $7.392$ \\
Wavelet-sym4    & $0.2290$ & $9.934$ & $0.4186$ & $7.305$ \\
Wavelet-sym5    & $0.2291$ & $9.933$ & $0.4268$ & $7.220$ \\
Wavelet-sym6    & $0.2289$ & $9.937$ & $0.4360$ & $7.128$ \\
Wavelet-sym7    & $0.2290$ & $9.936$ & $0.4451$ & $7.038$ \\
\midrule
Nyström-500 
& $\mathbf{0.2188}$ & $\mathbf{10.122}$ 
& $\mathbf{0.2151}$ & $\mathbf{10.1959}$ \\
\bottomrule
\end{tabular}
\end{table}
\vspace{0.3em}

\newpage

Next, we compare our approach with an SVD-based denoising method similar to that proposed in~\cite{Svd_comparison}, adapted to match our experimental pipeline. This method requires two independent noisy observations of the same underlying signal. Accordingly, for each generated signal, we create two noisy realizations by adding independent Gaussian noise with standard deviations $\sigma_1 = 1.5$ and $\sigma_2 = 1.2$. The denoising is performed in a frame-wise manner using overlapping windows with a Hann window and overlap-add reconstruction. For each frame, the average and half-difference of the two noisy observations are computed and embedded into Hankel matrices. Singular value decomposition (SVD) is then applied to both matrices, and the largest singular value of the Hankel matrix corresponding to the difference signal is used as a noise threshold. Singular values of the average Hankel matrix below this threshold are suppressed, and the denoised signal is reconstructed via diagonal averaging (Hankelization) followed by overlap-add synthesis.

We compare this analytical approach with our proposed Nyström-based kernel ridge regression method in an online denoising setting over $100$ independent repartition (experiments), each consisting of $100$ test signals (trials). The noisy signal exhibits an average MSE of $2.255590$. The SVD-based method reduces this error to an average MSE of $0.259554$, corresponding to an average SNR gain of $9.455\,\mathrm{dB}$.  In contrast, our Nyström method (using $500$ subsamples) achieves a lower average MSE of $0.2151$ and a higher average SNR gain of $10.20\,\mathrm{dB}$. These results demonstrate that while the SVD-based method effectively suppresses noise, the proposed Nyström approach consistently achieves superior denoising performance. It is important to note that the SVD-based method relies on the assumption of purely additive noise and the availability of two independent noisy observations; in the presence of convolution noise, its performance degrades significantly, whereas the proposed Nyström method remains robust.

\vspace{0.1em}
\subsection{IMAGE DENOISING}\label{imge_denoising:subsection}

Next, we consider image denoising via operator learning. To simulate realistic degradation, we conduct two independent experiments, each corresponding to a distinct degradation model applied to a face dataset~\cite{bainbridge2013intrinsic}. In both cases, all images are first converted to grayscale and resized to \(100 \times 100\) pixels. The total dataset consists of $10{,}215$ images, out of which $m = 5000$ were selected for Nyström subsampling. Let \(x \in \mathbb{R}^{s \times s}\), with \(s = 100\), denote a clean image. Each experiment yields input--output pairs \(\{(\tilde{x}_i, x_i)\}\), where \(\tilde{x}_i\) is the degraded image and \(x_i\) is the corresponding clean image. We learn a mapping from degraded to clean images using Kernel Ridge Regression (KRR), with hyperparameters \((\gamma, \lambda)\) selected via grid search on a validation set. To contextualize performance, we compare against BM3D~\cite{Bm3d} and DnCNN~\cite{DnCNN-paper}; for a fair comparison, DnCNN is trained and tested on the same samples as our KRR-based approach over 30 epochs.

\subsubsection{Motion Blur}
The degraded observation $\tilde{x}$ is obtained via motion blur:
\begin{equation}
    \tilde{x} = x * k_m,
\end{equation}
where $*$ denotes 2D convolution and $k_m \in \mathbb{R}^{q \times q}$ is a horizontal line kernel whose central row entries are set to one and normalized:
\begin{equation}
k_m(i,j) =
\begin{cases}
\dfrac{1}{q}, & \text{if } i = \lfloor q/2 \rfloor, \\
0, & \text{otherwise.}
\end{cases}
\end{equation}
This ensures $\sum_{i,j} k_m(i,j) = 1$, and we set $q = 9$. The motion blur kernel represents one of the most realistic degradation types encountered in practice~\cite{blurring}, making this experiment a deconvolution problem.

To enrich the feature representation, we compute the 2D Discrete Cosine Transform (DCT) of each degraded image,
\begin{equation}
    C = \mathrm{DCT}_2(\tilde{x}) \in \mathbb{R}^{\sqrt{d} \times \sqrt{d}}, \qquad \sqrt{d} = 100,
\end{equation}
and retain the low-frequency coefficients by truncating $C$ to its top-left block $C_t \in \mathbb{R}^{h \times w}$, where $t = (h, w)$ denotes the truncation threshold. The vectorized coefficients are concatenated with the original pixel values to form a hybrid feature vector:
\begin{equation}
    \phi_t(\tilde{x}) = \begin{bmatrix} \tilde{x} \\ \mathrm{vec}(C_t) \end{bmatrix} 
    \in \mathbb{R}^{d + hw}.
\end{equation}
The low-frequency DCT coefficients capture global structural information, partially compensating for the frequency attenuation introduced by the blur. The threshold $t = (h, w)$ is selected jointly with $(\gamma, \lambda)$ via cross-validation.

Table~\ref{tab:motion_blur_table} summarizes quantitative results in terms of PSNR, SSIM, and LPIPS averaged over 200 unseen test images, and Figure~\ref{Visual_constructions1} provides qualitative reconstructions.

\begin{table}[h]
\centering
\caption{Image reconstruction results under Motion Blur}
\label{tab:motion_blur_table}
\begin{tabular}{lccc}
\hline
Method & PSNR $\uparrow$ & SSIM $\uparrow$ & LPIPS $\downarrow$ \\
\hline
Nystr\"{o}m (ours) & 25.81 & 0.9687 & 0.0962 \\
BM3D               & 20.88 & 0.9538 & 0.3595 \\
DnCNN              & 33.23 & 0.9790 & 0.0095 \\
\hline
\end{tabular}
\end{table}

\begin{figure}[h!]
    \centering
    \includegraphics[width=1\linewidth]{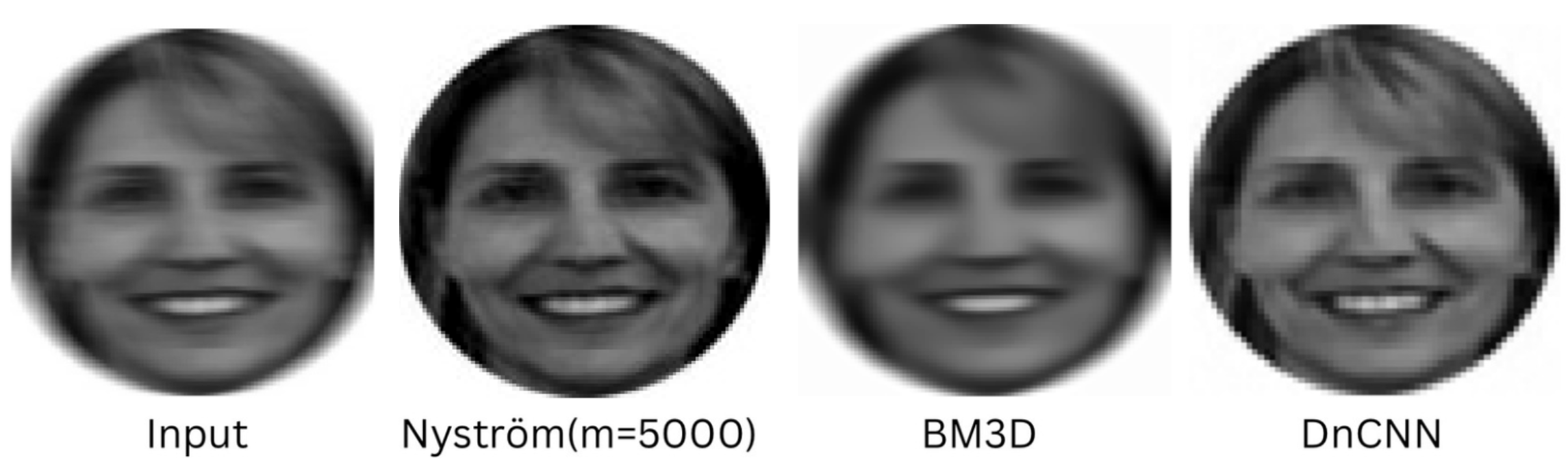}
    \caption{Reconstructed image for KRR, Nystr\"{o}m approximation, BM3D, and DnCNN under motion blur.}
    \label{Visual_constructions1}
\end{figure}

\subsubsection{Gaussian Noise}

The degraded observation is obtained by corrupting the clean image with additive Gaussian noise:
\begin{equation}
    \tilde{x} = x + \eta, \quad \eta \sim \mathcal{N}(0, \sigma^2 I),
\end{equation}
where \(\sigma > 0\) controls the noise level. Unlike the motion blur setting, the inclusion of low-frequency DCT coefficients did not yield significant improvement, so features consist solely of the raw pixel values of \(\tilde{x}\). To assess robustness to variations in noise intensity, evaluations are conducted at a noise level \(\sigma\) different from that used during training. Results in terms of average PSNR, SSIM, and LPIPS over 200 unseen test images are summarized in Table~\ref{tab:gaussian_blur_table}, with qualitative reconstructions in Figure~\ref{Visual_constructions2}.

\begin{table}[h]
\centering
\caption{Image reconstruction results under Gaussian Noise}
\label{tab:gaussian_blur_table}
\begin{tabular}{lccc}
\hline
Method & PSNR $\uparrow$ & SSIM $\uparrow$ & LPIPS $\downarrow$ \\
\hline
Nystr\"{o}m (ours) & 28.62 & 0.9921 & 0.0764 \\
BM3D               & 29.50 & 0.9949 & 0.0910 \\
DnCNN              & 32.48 & 0.9551 & 0.0210 \\
\hline
\end{tabular}
\end{table}

\begin{figure}[h!]
    \centering
    \includegraphics[width=1.\linewidth]{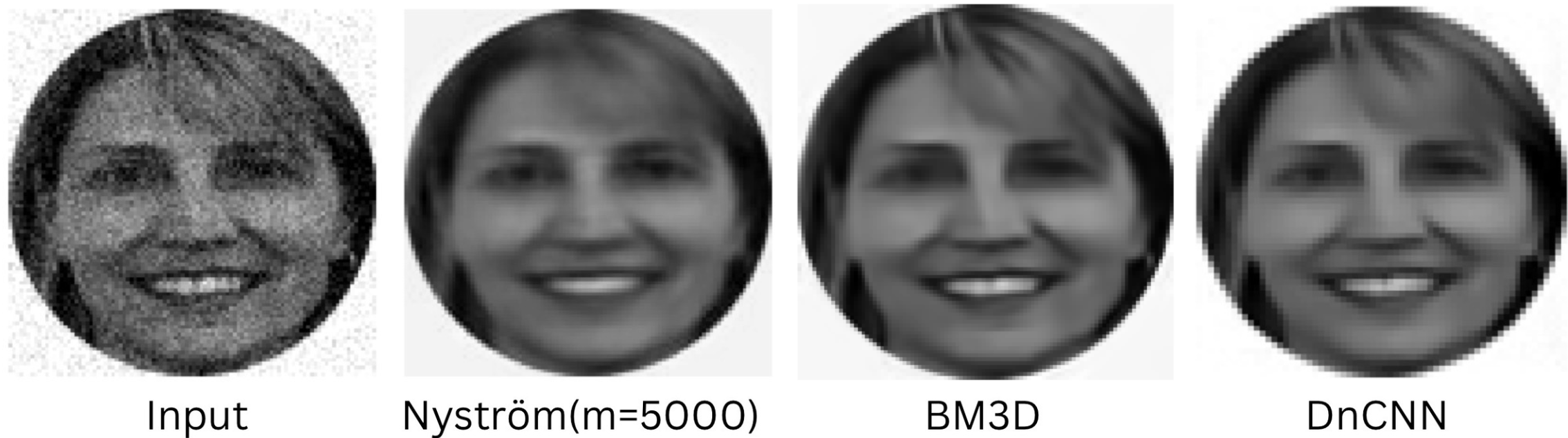}
    \caption{Reconstructed images for KRR, Nystr\"{o}m approximation, BM3D, and DnCNN under Gaussian noise.}
    \label{Visual_constructions2}
\end{figure}
\newpage
We have compared our results with the benchmark denoising methods BM3D and DnCNN available in the literature. The numerical results indicate that the proposed method attains performance on par with these state-of-the-art methods. Furthermore, in some cases, the visual reconstruction results demonstrate the robustness and effectiveness of our operator learning algorithm for image denoising under various blur conditions.

\subsection{Learning the Inverse Radon Transform}
A natural and practically important instance of our operator learning framework arises in computed tomography (CT) image reconstruction, where the underlying operator is given by the Radon transform $\mathcal{R}$, mapping an image $f$ to its sinogram $\mathcal{R}f$ collected over a finite set of projection angles. Rather than inverting this map, we pose CT reconstruction as the problem of \emph{learning the inverse Radon transform directly from data}, treating $\mathcal{R}^{-1}$ as the target operator to be estimated via kernel ridge regression and its Nystr\"om approximation. We use the \emph{Faces} dataset~\cite{bainbridge2013intrinsic}, with each image uniformly resized to $100 \times 100$ pixels and converted to grayscale. For every image $f$, the corresponding sinogram $\mathcal{R}f$ is generated using $70$ projection angles uniformly distributed over $[0^\circ,180^\circ]$ with $142$ detectors per projection, yielding input-output pairs $(\mathcal{R}f, f) \in \mathbb{R}^{9940} \times \mathbb{R}^{10000}$. The learning task is therefore to approximate the map $\mathcal{R}f \mapsto f$, the inverse Radon transform, directly from these paired observations, using a Gaussian radial basis function (RBF) kernel.

Evaluation is performed on $200$ test images, comparing exact kernel ridge regression against its Nystr\"om approximation. Reconstruction quality is quantified using mean squared error (MSE), peak signal-to-noise ratio (PSNR), and relative reconstruction error, together with the average computational time required to predict one sample. The bandwidth $\gamma$ is fixed at $0.01$, and the regularization parameter $\lambda$ is selected via cross-validation. Comparative results for the RBF kernel are summarized in Table~\ref{tab:krr_nystrom_results}, while Figure~\ref{HEATMAP_COMBINED} visualizes the average reconstruction error across Nystr\"om ranks as a heatmap, for both the RBF and inverse multi-quadric (IMQ) kernels.

\begin{table}[ht]
\centering
\caption{Comparison of full KRR and Nyström KRR for $n=10000$. 
Nyström results are shown for various sizes of $m$ for Gaussian kernel.}
\label{tab:krr_nystrom_results}
\begin{tabular}{lccccccc}
\hline
\textbf{Method} & $n$ & $m$  & PSNR & MSE & Rel-L2 & Time (ms) \\
\hline
Full KRR & 10000 & --  & 28.5803 & $1.728{\times}10^{-3}$ & $6.2541{\times}10^{-2}$ & 2.554 \\
Nyström  & 10000 & 50   & 21.6206 & $1.031{\times}10^{-2}$ & $1.5534{\times}10^{-1}$ & 0.051  \\
Nyström  & 10000 & 100   & 22.7393 & $7.826{\times}10^{-3}$ & $1.3495{\times}10^{-1}$ & 0.059 \\
Nyström  & 10000 & 200  & 23.3567 & $5.986{\times}10^{-3}$ & $1.1823{\times}10^{-1}$ & 0.070 \\
Nyström  & 10000 & 500 & 24.1298 & $4.462{\times}10^{-3}$ & $1.0210{\times}10^{-1}$ & 0.128 \\
         & 10000 & 1000  & 24.7846 & $3.836{\times}10^{-3}$ & $9.4721{\times}10^{-2}$ & 0.253 \\
         & 10000 & 1500  & 24.9936 & $3.666{\times}10^{-3}$ & $9.2525{\times}10^{-2}$ & 0.331 \\
         & 10000 & 2000  & 25.2574 & $3.426{\times}10^{-3}$ & $8.9672{\times}10^{-2}$ & 0.506 \\
\hline
\end{tabular}
\end{table}

\vspace{0.4em}
\begin{figure}[h!]
    \centering
    \begin{subfigure}[t]{0.48\linewidth}
        \centering
        \includegraphics[width=\linewidth]{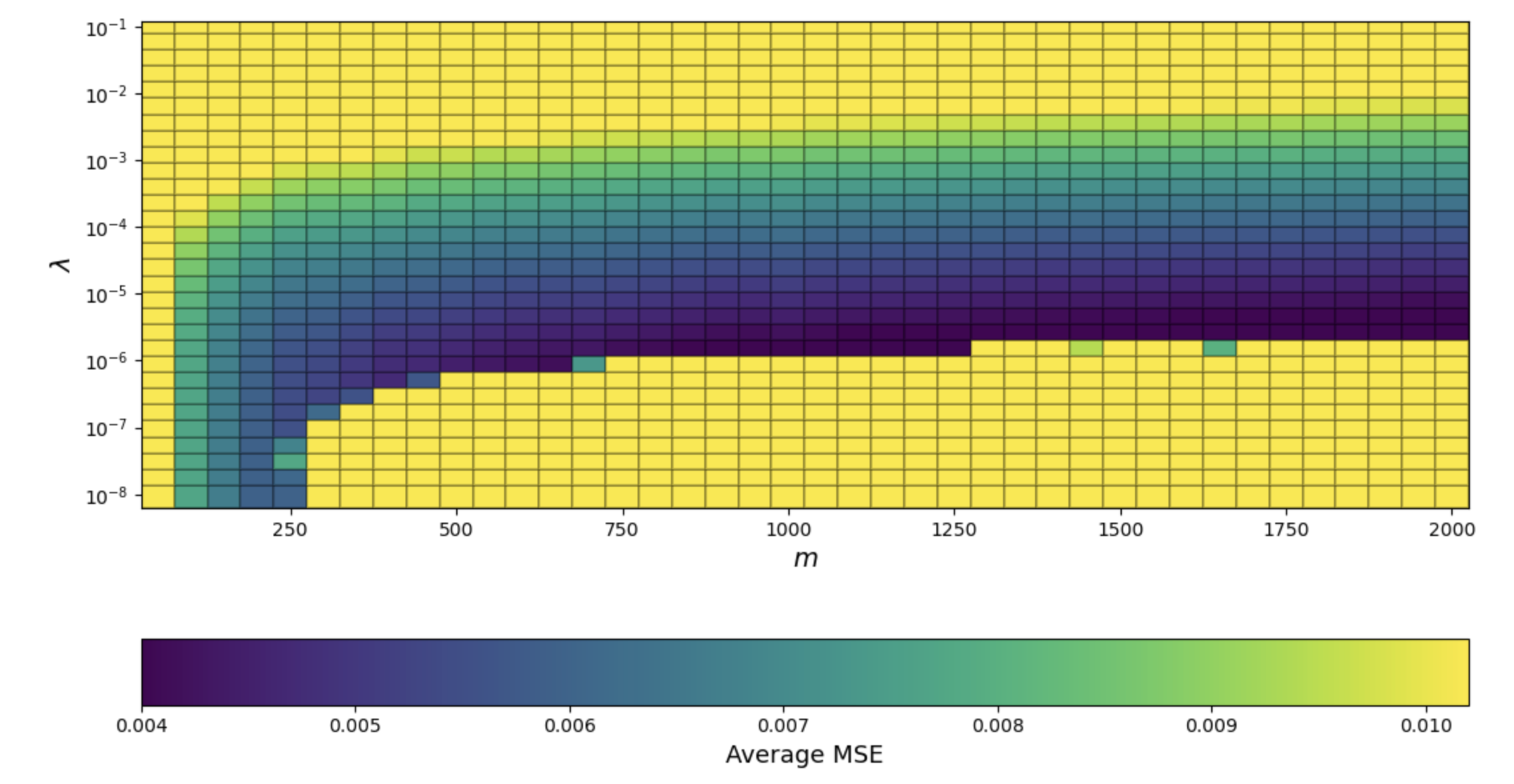}
        \caption{Gaussian kernel}
        \label{HEATMAP_GAUSSIAN}
    \end{subfigure}
    \hfill
    \begin{subfigure}[t]{0.48\linewidth}
        \centering
        \includegraphics[width=\linewidth]{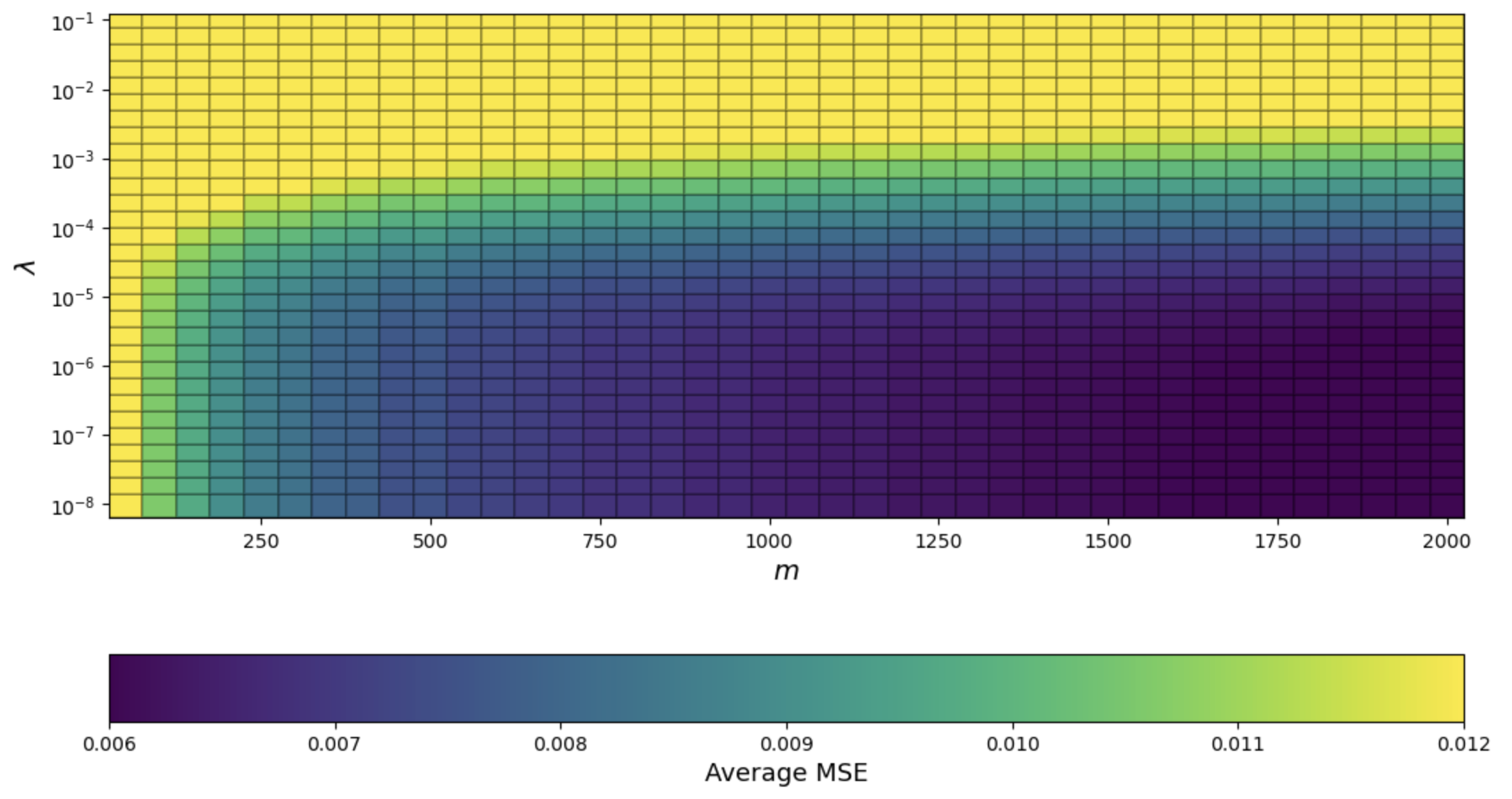}
        \caption{IMQ kernel}
        \label{HEATMAP_IMQ}
    \end{subfigure}
    
    \caption{Heatmap results between $m$ and $\lambda$ for Gaussian and IMQ kernels}
    \label{HEATMAP_COMBINED}
\end{figure}

\newpage

\subsection{Energy Efficiency Dataset}

We now consider the \emph{Energy Efficiency} dataset from the UCI Machine Learning Repository \cite{energy_efficiencyexample_242}. The dataset consists of $768$ simulated building configurations, each described by $8$ input features capturing geometric and architectural properties, including relative compactness, surface area, wall area, roof area, overall height, orientation, glazing area, and glazing area distribution. Using the given data, we apply our proposed operator-learning method to predict the heating and cooling loads of each building. 

Model performance is evaluated using the root mean squared error (RMSE), averaged across both output dimensions. We adopt a data split such that: 80 percent of the data was used for training, while the rest samples are reserved for testing. Hyper-parameters are selected via cross-validation.

We compare the full kernel ridge regression (KRR) solution with its Nyström approximation, which reduces computational complexity by constructing a low-rank approximation of the kernel matrix. In the Nyström method, a subset  is selected uniformly at random from the training data. In addition to RMSE, we record  computational time in order to quantify the efficiency gains achieved by the Nyström approximation relative to the full KRR baseline.  Table~\ref{tab:krr_nystrom_time} and Figure~\ref{fig:krr_nystrom_rmse1} report the prediction accuracy and average computational time to predict each test sample of full KRR and Nyström KRR for varying approximation ranks. The results show that Nyström KRR achieves significant computational savings while maintaining competitive accuracy for moderate values of 
m.

\begin{figure}[ht]
    \centering
    
    \begin{minipage}{0.48\textwidth}
        \centering
        \small
        \begin{tabular}{lccc}
        \toprule
        \textbf{Method} & \textbf{m} & \textbf{RMSE} & \textbf{Time(ms)} \\
        \midrule
        Full KRR        & --   & 0.8992 & 0.000305 \\
        \midrule
        Nyström KRR     & 25  & 3.5513 & 0.000062 \\
        Nyström KRR     & 50   & 3.1140 & 0.000082 \\
        Nyström KRR     & 75   & 2.4779 & 0.000100 \\
        Nyström KRR     & 100  & 1.8203 & 0.000113 \\
        Nyström KRR     & 125  & 1.5254 & 0.000126 \\
        Nyström KRR     & 150  & 1.2117  & 0.000140 \\
        \bottomrule
        \end{tabular}
    \end{minipage}
    \hfill
    \begin{minipage}{0.48\textwidth}
        \centering
        \includegraphics[width=\linewidth]{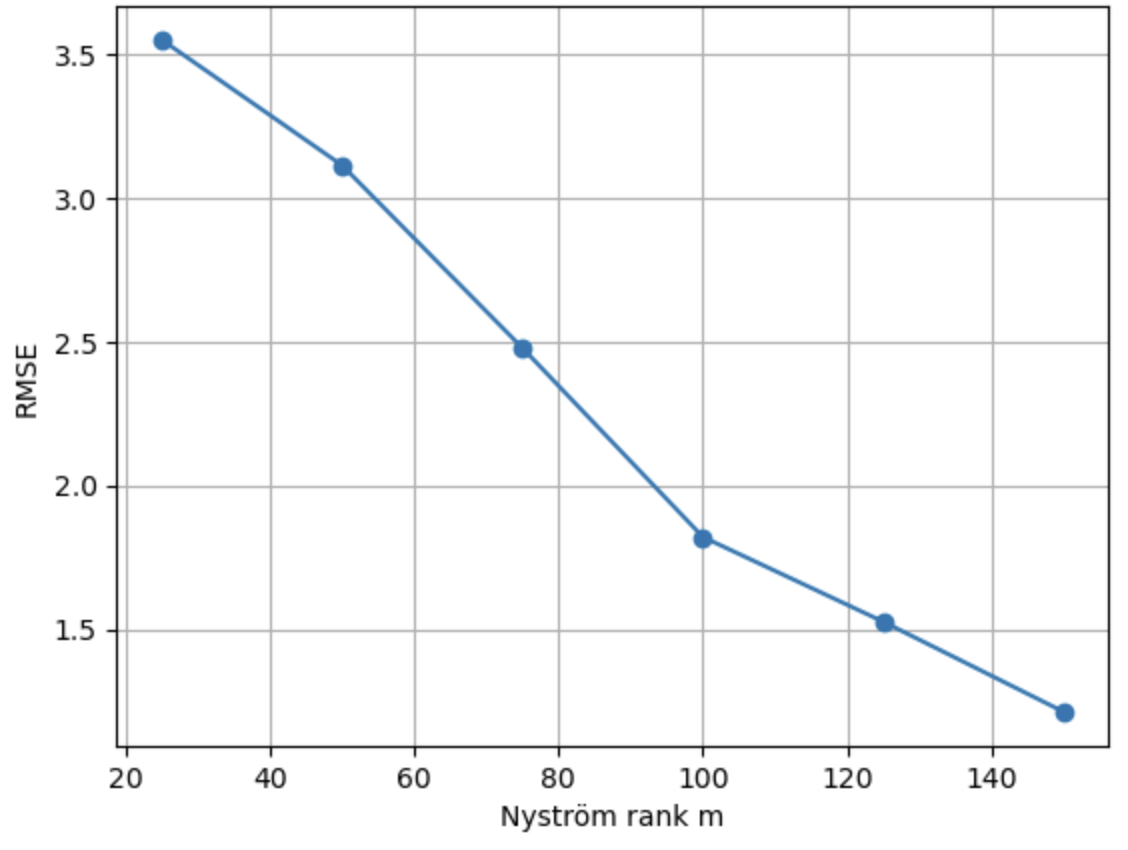}
    \end{minipage}

    \vspace{0.8em}

    \begin{minipage}[t]{0.48\textwidth}
        \captionsetup{type=table}
        \captionof{table}{Comparison of full KRR and Nyström KRR.}
        \label{tab:krr_nystrom_time}
    \end{minipage}
    \hfill
    \begin{minipage}[t]{0.48\textwidth}
        \captionof{figure}{Graph between $m$ and RMSE.}
        \label{fig:krr_nystrom_rmse1}
    \end{minipage}

\end{figure}

\section{ACKNOWLEDGMENT}

Vaibhav Silmana thanks the University Grants Commission (UGC), Government of India, for its financial assistance (Student ID. 221610001628). The authors acknowledge the use of the coding agent Codex for   debugging and verifying the code.

\bibliographystyle{abbrv}\bibliography{ref}
\end{document}